\theoremstyle{definition}
\newtheorem{Def}{Definition}[section]
\newtheorem{Thm}[Def]{Theorem}
\newtheorem{Lem}[Def]{Lemma}
\newtheorem{Pro}[Def]{Proposition}
\newtheorem{Rem}[Def]{Remark}
\newtheorem{Cor}[Def]{Corollary}
\newtheorem{Exa}[Def]{Examples}
\def\IR{\mathbb{R}}
\def\IC{\mathbb{C}}
\def\K{\mathcal{K}}
\def\e{\varepsilon}
\def\Cl{\textup{Cliff}}
\def\A{\mathcal{A}}
\def\wox{\widehat{\otimes}}
\def\IN{\mathbb{N}}
\def\supp{\textup{supp}}
\def\wod{\widehat{\odot}}
\def\IA{\mathbb{A}_u}
\def\H{\mathcal{H}}
\title{$K$-theory of the maximal and reduced Roe algebras of metric spaces with A-by-CE coarse fibrations
*\footnotetext{ Supported in part by NSFC (No.  11831006, 11771143,  12171156).}}
\author{Liang Guo, \and Zheng Luo, \and Qin Wang,\and Yazhou Zhang}
\date{}
\begin{document}
\maketitle
\begin{abstract}
Let $X$ be a discrete metric space with bounded geometry. In this paper, we show that if $X$ admits an "A-by-CE coarse fibration", then the canonical quotient map $\lambda: C^*_{\max}(X)\to C^*(X)$ from the maximal Roe algebra to the Roe algebra of $X$,  and the canonical quotient map $\lambda: C^*_{u, \max}(X)\to C^*_u(X)$ from the maximal uniform Roe algebra to the uniform Roe algebra of $X$, induce isomorphisms on $K$-theory. A typical example of such a space arises from a sequence of group extensions $\{1\to N_n\to G_n\to Q_n\to 1 \}$ such that the sequence $\{N_n\}$ has Yu's property A, and the sequence $\{Q_n\}$ admits a coarse embedding into Hilbert space. This extends an early result of J. \v{S}pakula and R. Willett \cite{JR2013} to the case of metric spaces which may not admit a coarse embedding into Hilbert space.  Moreover, it implies that the maximal coarse Baum-Connes conjecture holds for a large class of metric spaces which may not admit a fibred coarse embedding into Hilbert space.
\end{abstract}

\section{Introduction}

Amenability plays a key role in many areas of mathematics. For a locally compact group $G$, it is well known that $G$ is amenable if and only if the canonical quotient map $\lambda: C^*_{\max}(G)\to C^*_{\mathrm{red}}(G)$ from the maximal to the reduced group $C^*$-algebra is an isomorphism.
Moreover, the $K$-theory functor gives
rise to a morphism 
$\lambda_*: K_*(C^*_{\max}(G))\to K_*(C^*_{\mathrm{red}}(G))$ of abelian groups.  J. Cuntz \cite{Cuntz} introduced $K$-amenability for discrete groups and, in particular,  showed that
$\lambda_*$ is an isomorphism for some non-amenable discrete groups such as free groups $\mathbb{F}_n$ or
$\mathrm{SL}(2, \mathbb{Z})$. N. Higson and G. Kasparov  \cite{HigsonKasparov2001} further proved that $\lambda_*$ is an isomorphism for all
a-T-menable groups, as part of their celebrated work on the Baum-Connes conjecture.
\par
There is a striking parallel between geometric/analytic properties for groups and that for metric spaces in the setting of coarse geometry. The roles of amenability and a-T-menability for groups are played by Yu's {\em property A} and Gromov's {\em coarse embeddability into Hilbert space} for metric spaces, respectively. The roles of the maximal and reduced group $C^*$-algebras for a group are played by the {\em maximal Roe algebra $C^*_{\max}(X)$} and the {\em Roe algebra $C^*(X)$}, respectively,  or the {\em maximal uniform Roe algebra $C^*_{u, \max}(X)$} and the {\em uniform Roe algebra $C^*_{u}(X)$}, respectively, for a metric space $X$.
\par
Recall that the Roe algebra $C^*(X)$ and uniform Roe algebra $C^*_u(X)$ were introduced by J. Roe \cite{Roe1988,Roe1993}  in his work on index theory on general non-compact complete Riemannian manifolds. As a geometric analogue to the Baum-Connes conjecture for groups, the {\em coarse Baum-Connes conjecture} is an algorithm for the $K$-theory $K_*(C^*(X))$ of the Roe algebra $C^*(X)$ of a metric space $X$ with bounded geometry. G. Yu \cite{Yu2000} proved that the coarse Baum-Connes conjecture holds for all metric spaces with bounded geometry which admit a coarse embedding into Hilbert space. Meanwhile, the uniform Roe algebras $C^*_u(X)$ have found applications in a variety of fields such as $C^*$-algebra theory, index theory, topological dynamics, single operator theory, and mathematical physics  (c.f. \cite{rigidity2021,BragaFarahrigid,STYcoarseBaum-Connes,JRrigidity,GeometrypropertyT,WZnucleardimension}).
On the other hand, the maximal Roe algebra $C^*_{\max}(X)$ for a metric space $X$ with bounded geometry was introduced by G. Gong, Q. Wang and G. Yu \cite{GWY2008} in their work on geometrization of the strong Novikov conjecture for residually finite groups. The maximal uniform Roe algebra $C^*_{u, \max}(X)$ can be defined analogously.
\par
By the universality of the maximal completions,  just as in the group case, there are canonical quotient maps
$$\lambda:C^*_{max}(X)\rightarrow C^*(X) \quad \mbox{ and }\quad \lambda:C^*_{u,max}(X)\rightarrow C_u^*(X), $$
which further induce homomorphisms of abelian groups on the $K$-theory level:
$$\lambda_*: K_*(C^*_{max}(X))\rightarrow K_*(C^*(X)) \quad \mbox{ and }\quad 
\lambda_*: K_*(C^*_{u,max}(X))\rightarrow K_*(C_u^*(X)).$$
\par
J. \v{S}pakula and R. Willett \cite{JR2013} proved that, for a discrete metric space $X$ with bounded geometry,  (1) if $X$ has Yu's property A, then the quotient maps 
$\lambda:C^*_{max}(X)\rightarrow C^*(X)$ and $\lambda:C^*_{u,max}(X)\rightarrow C_u^*(X)$ are isomorphisms; (2) if $X$ admits a coarse embedding into Hilbert space, then the induced maps
$\lambda_*: K_*(C^*_{max}(X))\rightarrow K_*(C^*(X))$ and 
$\lambda_*: K_*C^*_{u,max}(X)\rightarrow K_*C_u^*(X)$  on $K$-theory are isomorphisms. Notice that these results are intimately parallel to the group case! As remarked in  \cite{JR2013}, while the statement (1) is fairly straightforward, the proof of the statement (2) relies heavily on the deep work of G. Yu \cite{Yu2000} on the coarse Baum-Connes conjecture, and that of N. Higson, G. Kasparov and J. Trout \cite{HKT1998} on an infinite dimensional Bott periodicity theorem.
\par
Very recently,  J. Deng, Q. Wang and G. Yu \cite{DWY2021} proved that, for a sequence of group extensions $(1\to N_n\to G_n\to Q_n \to 1)_{n\in \mathbb{N}}$ of finitely generated groups with uniformly finite generating subsets, if the coarse disjoint union of $(N_n)_{n\in \mathbb{N}}$ has Yu's property A and the coarse disjoint union of $(Q_n)_{n\in \mathbb{N}}$ admits a coarse embedding into Hilbert space, then the coarse Baum-Connes conjecture holds for the coarse disjoint union of $(G_n)_{n\in \mathbb{N}}$. Such a sequence of group extensions  $(1\to N_n\to G_n\to Q_n \to 1)_{n\in \mathbb{N}}$ is said to have an {\em A-by-CE structure}. It follows that the coarse Baum-Connes conjecture holds for the relative expanders and group extensions constructed by G. Arzhantseva and R. Tessera \cite{relativeexpander2015,AT2018}, and special box spaces of free groups discovered by T. Delabie and A. Khukhro \cite{boxspace2018}, which do not coarsely embed into Hilbert space, yet do not contain any weakly embedded expander.
\par
In this paper, as motivated by the above developments, we shall extend the result of J. \v{S}pakula and R. Willett \cite{JR2013} on $K$-theory from coarsely embeddable spaces (i.e. {\em CE spaces}) to spaces with an {\em A-by-CE structure}. Such a problem has recently been posed by Alexander Engel in his blog \cite{A2021}. 
\par
To get started, recall that a family $\{X_i\}_{i\in I}$  of discrete metric spaces with bounded geometry is said to have {\em equi-property A} if for any $R>0, \epsilon>0$, there exists $S>0$ and a family of maps $\xi^{(i)}: X_i \rightarrow \ell^2(X_i)$, $i\in I$, such that
 	(1) for all $x,y\in X_i$, $\xi^{(i)}_x(y)\in [0,1]$;
 	(2) for all $x\in X_i$, $\|\xi^{(i)}_x\|=1$;
 	(3) $\sup\{\|\xi^{(i)}_x-\xi^{(i)}_y\|:d(x,y)\leq R, x,y \in X_i\}<\epsilon$;
 	(4) for all $x\in X_i$, $\mbox{supp} (\xi^{(i)}_x)\subset B(x,S)$, $i\in I$. 
Inspired by a condition used by M. Dadarlat and E. Guentner in \cite[Corollary 3.3]{DG2007}, it would be helpful to introduce the following notion of {\em A-by-CE coarse fibration}, so as to cover more general spaces than coarse disjoint unions of sequences of group extensions with an A-by-CE structure.  	 
\par
\begin{Def}\label{A-by-CE coarse fibration}
A discrete metric space with bounded geometry $X$ is said to admit an {\em A-by-CE coarse fibration} if there exist a discrete metric space $Y$ with bounded geometry and a map $p:X\rightarrow Y$ satisfying  
the following conditions:
\begin{enumerate}
\item[(1)] the map $p$ is uniformly expansive, i.e., for any $r>0$ there exists $s>0$ such that
$$d_X(x,x')<r \Longrightarrow d_Y(p(x),p(x'))<s \; ;$$
\item[(2)] for any $R>0$, the family of  subspaces  $\{p^{-1}\left(B_Y (y,R)\right)\}_{y\in Y}$ of $X$  has equi-property A;
\item[(3)] the space $Y$ admits a coarse embedding into Hilbert space.
\end{enumerate}  
\end{Def}
\par
Similarly, one may talk about {\em A-by-A} coarse fibration, {\em CE-by-A} coarse fibration, {\em CE-by-CE} coarse fibration and so on. M. Dadarlat and E. Guentner proved in \cite[Corollary 3.3]{DG2007} that a space $X$ with an {\em A-by-A} coarse fibration has Property A, and a space $X$ with  a {\em CE-by-A} coarse fibration is CE, i.e. coarsely embeddable into Hilbert space. On the other hand, however, a space $X$ with an {\em A-by-CE} coarse fibration can be far from being coarsely embeddable into Hilbert space,  as demonstrated in the examples of relative expanders \cite{relativeexpander2015} and group extensions \cite{AT2018} of G. Arzhantseva and R. Tessera,  and special box spaces free groups \cite{boxspace2018} of T. Delabie and A. Khukhro, mentioned above. 
\par
Our main result in this paper is the following theorem.
\begin{Thm}\label{main result}
{\it Let $X$ be a discrete metric space with bounded geometry. If $X$ admits an A-by-CE coarse fibration, then the homomorphisms on $K$-theory
$$\lambda_*: K_*(C^*_{max}(X))\rightarrow K_*(C^*(X)) \quad \mbox{ and }\quad 
\lambda_*: K_*(C^*_{u,max}(X))\rightarrow K_*(C_u^*(X))$$
are isomorphisms. }
\end{Thm}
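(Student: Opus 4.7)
The plan is to adapt the twisted Roe algebra machinery of Yu and \v{S}pakula--Willett, performing the twisting with respect to a coarse embedding of the base space $Y$ rather than of $X$ itself, and then to exploit equi-property A of the fibers of $p$ to collapse the maximal--reduced distinction at the twisted level.

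First, using a coarse embedding $f : Y \hookrightarrow H$ into a Hilbert space and the factorization $f \circ p : X \to H$, I would construct twisted maximal and reduced Roe algebras $C^*_{\max}(X,\mathcal{A})$ and $C^*(X,\mathcal{A})$ (and their uniform analogues) with coefficients in the Higson--Kasparov--Trout algebra $\mathcal{A}(H)$. This comes equipped with a Bott morphism $\beta : C^*_{(\max)}(X) \to C^*_{(\max)}(X,\mathcal{A})$ that is compatible with the quotient maps $\lambda$, yielding a commutative square of K-theory groups with $\lambda_*$ on the horizontal arrows and $\beta_*$ on the verticals.

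Next I would show that $\beta_*$ is an isomorphism on K-theory at both completions. On the reduced side this is Yu's Dirac/dual-Dirac argument, reworked so that propagation and support are controlled via $f \circ p$ rather than via an embedding of $X$; on the maximal side the same argument passes through provided each intermediate ideal appearing in the Mayer--Vietoris cutting-and-pasting has a well-behaved maximal completion, which holds because those ideals live over finite-dimensional affine subspaces of $H$ whose preimages under $f \circ p$ are, by condition (2), unions of equi-property A fibers and therefore already have maximal equal to reduced on the nose. The key new step is to prove that the quotient $C^*_{\max}(X,\mathcal{A}) \to C^*(X,\mathcal{A})$ is itself an isomorphism on K-theory. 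The point is that the twisting by $\mathcal{A}(H)$ forces elements of these algebras to be supported, up to arbitrarily small error measured in $\mathcal{A}(H)$, on pairs $(x,x')$ with $d_Y(p(x),p(x'))$ bounded. Hence the twisted algebra is a direct limit, indexed by $R > 0$, of subalgebras modeled on the family $\{p^{-1}(B_Y(y,R))\}_{y \in Y}$; by condition (2) this family has equi-property A, so the \v{S}pakula--Willett theorem applies uniformly in $y$ and yields $\lambda_* $ an isomorphism at each stage, and continuity of K-theory under direct limits promotes this to the whole twisted algebra. Combined with $\beta_*$ being an isomorphism this proves the theorem; the uniform case goes through by replacing $\mathcal{K}(\ell^2 X)$-coefficients by scalars throughout.

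The hardest part will be making the third step precise: formalizing the claim that twisted-algebra elements have approximately bounded $Y$-propagation, organizing an honest filtration whose successive quotients can be handled fiberwise, and checking that \v{S}pakula--Willett's explicit Schur-multiplier construction extends to these twisted coefficients with controls uniform in $y \in Y$ (this is what equi-property A, as opposed to mere fiberwise property A, is there to provide). This is where the A-by-CE structure does its real work and where the argument goes essentially beyond the purely CE case treated in \cite{JR2013}.
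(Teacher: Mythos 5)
Your proposal takes essentially the same route as the paper: twist the (uniform) Roe algebras by the Higson--Kasparov--Trout algebra $\mathcal{A}(H)$ via $f\circ p$, prove the twisted maximal-to-reduced quotient is an isomorphism by decomposing (the paper uses an $r$-separated ideal decomposition and Mayer--Vietoris pushouts, establishing a $C^*$-algebra isomorphism in Theorem \ref{max-red}) into pieces governed by the equi-property-A families $\{p^{-1}(B_Y(y,R))\}_{y\in Y}$ and applying the \v{S}pakula--Willett Schur-multiplier argument uniformly, and then close a commutative K-theory square via the Bott and Dirac asymptotic morphisms. The one imprecision worth noting is your claim that $\beta_*$ is an isomorphism: the Dirac/dual-Dirac machinery (as in Yu and in Proposition \ref{composition iso} of the paper) in fact delivers only the one-sided identity $E_*\circ\alpha_*\circ\beta_*=\mathrm{id}$, but combined with the isomorphism on the twisted level this is exactly what the diagram chase requires, so your outline is sound once that step is stated at the correct strength.
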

\par
Recall \cite{Higherindex} that associated to any metric space $X$ with bounded geometry are two assembly maps $\mu$ and $\mu_{\max}$, which fit into the following commutative diagram:

\[
\xymatrix{
	&  & K_*(C^*_{max}(X)) \ar[d]^{\lambda_*}  \\
\displaystyle\lim_{d\to \infty}	K_*(P_d(X))\ar[urr]^{\mu_{max}}\ar[rr]^{\mu} &  &  K_*(C^*(X)) ,
}
\]
where on the left hand side $\lim_{d\to \infty} K_*(P_d(X))$ is the direct limit of the $K$-homology group of the Rips complex $P_d(X)$ of $X$ at scale $d>0$. The coarse Baum-Connes conjecture states that $\mu$ is an isomorphism, while the {\em maximal coarse Baum-Connes conjecture} states that $\mu_{\max}$ is an isomorphism. 
In \cite{CWY2013},  X. Chen, Q. Wang and G. Yu introduced a notion of {\em fibred coarse embedding into Hilbert space}, which is a far generalization to the notion of coarse embedding into Hilbert space, and proved that the maximal coarse
Baum-Connes conjecture holds for metric spaces with  bounded geometry which admit a fibred coarse embedding into Hilbert space. However, a space $X$ with an {\em A-by-CE coarse fibraton} 
can even be far from being fibred coarsely embeddable into Hilbert space. This is revealed by the first two examples of relative expander of G. Arzhantseva and R. Tessera \cite{relativeexpander2015}, which are box spaces of certain {\em abelian-by-free} group extensions 
$\mathbb{Z}^2\rtimes_{Q} \mathbb{F}_3$ and 
$\mathbb{Z} \wr_Q \mathbb{F}_U$. Since these two groups have relative property (T) with respect to an infinite subset (see   \cite[Section 7, Theorem 7.3]{relativeexpander2015} for more details), 
any box space of them does not admit a fibred coarse embedding into Hilbert space (cf. \cite{CWW2013}).
\par
Nevertheless, the proof of the recent result in \cite{DWY2021} by J. Deng, Q. Wang and G. Yu 
for sequences of group extensions with  A-by-CE structure can be adapted to metric spaces with A-by-CE coarse fibration. More precisely, we have the following result.
\par
\begin{Thm}[J. Deng, Q. Wang and G. Yu \cite{DWY2021}]\label{DWY}
{\it Let $X$ be a discrete metric space with bounded geometry. If $X$ admits an A-by-CE coarse fibration, then the coarse Baum-Connes conjecure holds for $X$. }
\end{Thm}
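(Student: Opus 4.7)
The plan is to follow, essentially verbatim, the strategy of Deng--Wang--Yu \cite{DWY2021}, transferred from sequences of group extensions to the more flexible setting of an A-by-CE coarse fibration $p: X \to Y$. The core idea is a twisted version of Yu's Dirac/dual-Dirac argument \cite{Yu2000} in which the Hilbert-space twist is supplied by the base $Y$ rather than by $X$ itself. Choose a coarse embedding $f: Y \to \H$ of $Y$ into Hilbert space and form the Higson--Kasparov--Trout algebra $\A(\H) = \mathcal{S} \wox \Cl(\H)$. Using the composition $f \circ p : X \to \H$ as twisting data, I would construct a twisted Roe algebra $C^*(X;\H)$ of finite-propagation operators with coefficients in $\A(\H)$, a twisted localization algebra $C^*_L(X;\H)$, a Bott homomorphism $\beta: C^*(X) \to C^*(X;\H)\ox\K$, and a twisted assembly map $\mu^{\H}: \lim_{d\to\infty}K_*(P_d(X)) \to K_*(C^*(X;\H))$, fitting into a commutative diagram $\mu^{\H} = \beta_* \circ \mu$.

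With this setup in place, the theorem reduces to two assertions: \textbf{(a)} the twisted assembly $\mu^{\H}$ is an isomorphism, and \textbf{(b)} the Bott map $\beta_*$ is a $K$-theory isomorphism. For \textbf{(b)}, I would invoke the infinite-dimensional Bott periodicity machinery of \cite{HKT1998}: filter $\H$ by finite-dimensional subspaces $V_n$, cut the twisted Roe algebra accordingly, and apply the finite-dimensional Bott isomorphism level by level; a Mayer--Vietoris and Five Lemma argument then propagates the isomorphism to the inductive limit, using the fact that the coarse embedding $f$ pulls the cuts back, through $f\circ p$, to geometrically controlled subsets of $X$. For \textbf{(a)}, I would cut $X$ along preimages $p^{-1}(B_Y(y,R))$ of bounded subsets of $Y$; because the family $\{p^{-1}(B_Y(y,R))\}_{y\in Y}$ has equi-property A, a uniform partition-of-unity construction reduces the twisted assembly on each piece to the property-A situation, where it is known to be an isomorphism by the arguments of \v{S}pakula--Willett \cite{JR2013} combined with those of Yu \cite{Yu2000}.

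I expect step \textbf{(a)} to be the main obstacle. The delicate point is that the decomposition along $Y$ (needed to engage equi-property A of the fibers) and the decomposition along $\H$ (required for the Bott twist) must be harmonized: the partition-of-unity functions produced by equi-property A live on $X$, yet must commute, up to operator-norm controlled errors, with the infinite-dimensional Bott symbol on $\A(\H)$. In the group-extension proof of \cite{DWY2021} this coherence is supplied by the ambient coset structure of each $N_n \subset G_n \to Q_n$; in the purely coarse-geometric setting of Definition \ref{A-by-CE coarse fibration} only the uniform expansiveness of $p$ and the uniform defining data of equi-property A are available, so the commutator estimates, cutoff constructions and Mayer--Vietoris gluings of \cite{DWY2021} need to be reworked using these weaker hypotheses. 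Once this is done, assembling \textbf{(a)} and \textbf{(b)} in the commutative diagram $\mu^{\H} = \beta_* \circ \mu$ forces $\mu$ itself to be an isomorphism, which is the coarse Baum--Connes conjecture for $X$.
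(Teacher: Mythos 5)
The paper gives no proof of this theorem: it simply cites \cite{DWY2021} and asserts that the argument there for sequences of group extensions with A-by-CE structure adapts to the coarse-fibration setting of Definition \ref{A-by-CE coarse fibration}. Your sketch — twisted Roe and localization algebras built from $f\circ p$, reduction to (a) the twisted assembly isomorphism via equi-property A of the fibers $p^{-1}(B_Y(y,R))$ and (b) the Bott isomorphism via the Higson--Kasparov--Trout machinery \cite{HKT1998} — is precisely the strategy of \cite{DWY2021} and hence matches the approach the paper intends by ``adapt,'' and you have correctly identified that reconciling the decomposition along $Y$ (driving equi-property A) with the decomposition along $\H$ (driving Bott periodicity) is the main technical point of the adaptation.
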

\par
Combined with the main result of this paper, Theorem \ref{main result}, we deduce the following result. 
\par
\begin{Cor}
{\it The maximal coarse Baum-Connes conjecture holds for all discrete metric spaces with bounded geometry which admits an A-by-CE coarse fibration.  }
\end{Cor}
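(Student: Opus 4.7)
The plan is essentially a two-out-of-three argument in the commutative triangle displayed immediately before the statement of the corollary. That triangle says
\[
\lambda_* \circ \mu_{\max} \;=\; \mu
\]
as homomorphisms $\lim_{d \to \infty} K_*(P_d(X)) \to K_*(C^*(X))$, where $\mu$ and $\mu_{\max}$ are the ordinary and maximal coarse assembly maps and $\lambda_*$ is the $K$-theoretic quotient map. So my proof has three short steps.

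First, I would invoke Theorem~\ref{DWY}: since by hypothesis $X$ admits an A-by-CE coarse fibration, the assembly map $\mu \colon \lim_{d \to \infty} K_*(P_d(X)) \to K_*(C^*(X))$ is an isomorphism. Second, I would invoke Theorem~\ref{main result}, which under the same hypothesis gives that $\lambda_* \colon K_*(C^*_{\max}(X)) \to K_*(C^*(X))$ is an isomorphism. Third, from the identity $\lambda_* \circ \mu_{\max} = \mu$ with $\mu$ and $\lambda_*$ both isomorphisms, I immediately conclude that $\mu_{\max} = \lambda_*^{-1} \circ \mu$ is an isomorphism, which is exactly the statement of the maximal coarse Baum--Connes conjecture for $X$.

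There is no genuine obstacle at this stage; the corollary is a purely formal consequence of the two preceding theorems, and I would not expect the write-up to exceed a few lines. The substantive content of course lies elsewhere: in Theorem~\ref{main result}, whose proof is the technical core of this paper, and in Theorem~\ref{DWY}, whose proof amounts to transcribing the arguments of Deng--Wang--Yu from sequences of group extensions with A-by-CE structure to the broader setting of metric spaces admitting an A-by-CE coarse fibration. The only thing I would be careful about in the write-up is to make explicit that both input theorems apply under exactly the hypothesis of Definition~\ref{A-by-CE coarse fibration}, so that no additional assumption (such as fibred coarse embeddability, which as noted may fail for relative expanders) creeps into the statement of the corollary.
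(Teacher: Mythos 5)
Your proposal is correct and matches the paper's intended argument exactly: the corollary is deduced by combining Theorem~\ref{DWY} (which gives that $\mu$ is an isomorphism) with Theorem~\ref{main result} (which gives that $\lambda_*$ is an isomorphism) in the commutative triangle $\lambda_* \circ \mu_{\max} = \mu$. There is nothing to add.
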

\par
In particular, the maximal coarse Baum-Connes conjecture holds for the first two examples of relative expanders of G. Arzhantseva and R. Tessera \cite{relativeexpander2015} which do not admit a fibred coarse embedding into Hilbert space.
Of course, this fact has already been observed in \cite{relativeexpander2015} by appealing certian results on the Baum-Connes conjecture (cf. \cite{HoyonoYu09}). Corollary 1.4 serves as an alternative proof to this fact.
\par
The proof of Theorem \ref{main result} relies on the work of J. Deng, Q. Wang and G. Yu \cite{DWY2021}. We will mainly focus on the proof for the (maximal and reduced) {\em uniform} Roe algebras, i.e. the map $\lambda_*: K_*(C^*_{u,max}(X))\rightarrow K_*(C_u^*(X))$ is an isomorphism. Due to the participation of compact coefficients, the (maximal and reduced) Roe algebras are more stable on $K$-theory, so that the proof for the case of Roe algebras is almost the same as that for the uniform algebras, and a little more simpler.
\par
This paper is organized as follows. In Section 2, we recall the preliminary notions of the Roe algebra, the uniform Roe algebra and the uniform algebra, and their maximal counterparts. In Section 3, we introduce a twisted uniform Roe algebra for a space with an A-by-CE coarse fibration,  and prove the maximal and reduced versions of this algebra are isomorphic at the $C^*$-algebra level. In Section 4, we use a variant of coarse Bott and Dirac asymptotic morphisms to prove our main result. 
\par 
As an additional remark, note that G. Arzhantseva and R. Tessera  \cite{relativeexpander2015} showed that no expanders can be weakly embedded in a sequence of group extensions with {\em CE-by-CE structure}.  This might support an affirmative answer to the following question:
\par
{\bf Problem:} Is it true that $\lambda_*: K_*(C^*_{max}(X))\rightarrow K_*(C^*(X))$ and $\lambda_*: K_*(C^*_{u,max}(X))\rightarrow K_*(C_u^*(X))$ are isomorphisms for all discrete metric spaces $X$ with bounded geometry which admit a {\em CE-by-CE coarse fibration}?

\section{Preliminaries}

In this section, we will briefly recall the Roe algebras, the uniform Roe algebras and the uniform algebras for a discrete metric space with bounded geometry, and their maximal counterparts. We will also recall several relevant notions in coarse geometry.

Let $X$ be a discrete  metric space with bounded geometry. Recall that a discrete metric space $X$ is said to have bounded geometry if for any $r> 0$ there exists
$N_r >0$ such that any ball of radius $r$ in $X$ contains at most $N_r$ points. A discrete metric space $X$ is said to be uniformly discrete if there exists $\delta>0$ such that for any two distinct points $x,y\in X$, we have $d(x,y)>\delta$.  
\par
Let $T=(T(x,y))$ be an $X$-by-$X$ matrix, where each $T(x, y)$ is an element in some algebra. $T$ is said to have \emph{finite propagation} if there exists $s>0$ such that $T(x,y)=0$ whenever $d(x,y)\geq s$. Fix a separable infinite dimensional Hilbert space $H$, and let $\mathcal{K}:=\mathcal{K}(H)$ denote the algebra of compact operators on $H$. 

\begin{Def}
The {\em algebraic uniform Roe algebra} of $X$, denoted by $\mathbb{C}_u[X]$, is the algebra of all $X$-by-$X$ matrices $T=(T(x,y))_{x,y\in X}$ with $T(x, y)\in \mathbb{C}$, the complex numbers, such that $T$ has finite propagation and $\sup_{x,y\in X}{|T(x,y)|}<\infty$. For fixed $s\geq 0$, we denote by $\mathbb{C}_u^s[X]$ the subspace consisting of those $T$ such that $\mbox{propagation}(T)\leq s$.

The {\em algebraic Roe algebra} of $X$, denoted by $\IC[X,\K]$, is the set of finite propagation $X$-by-$X$ indexed matrices $(T(x,y))_{x,y\in X}$ with entries in $\K$ satisfying $$\sup\limits_{x,y\in X}\|T(x,y)\|<\infty.$$ For fixed $s>0$, $\IC^s[X,\K]$ is defined analogously to the uniform case.
\end{Def}

Both $\IC_u[X]$ and $\IC[X,\K]$ are equipped with $*$-algebra structures by using the usual matrix operations, i.e., for any $T_1,T_2\in\IC_u[X]$ (respectively, $T_1,T_2\in\IC[X,\K]$), a product structure on $\IC_u[X]$ (resp. $\IC[X,\K]$) is defined by
$$(T_1T_2)(x,y)=\sum_{z\in X}T_1(x,z)T_2(z,y),$$
and the adjoint $T_1^*$ of $T_1$ is defined by
$$T_1^*(x,y)=(T_1(y,x))^*.$$
Note that $\IC_u[X]$ admits a natural $*$-representation by 'matrix multiplication' on $\ell^2(X)$, and similarly $\IC[X,\K]$ admits a natural $*$-representation on $\ell^2(X,H)$.

\begin{Def}
The {\em uniform Roe algebra} of $X$, denoted $C^*_u(X)$, is the completion of $\IC_u[X]$ for its natural representation on $\ell^2(X)$. The {\em Roe algebra} of $X$, denoted $C^*(X)$, is the completion of $\IC[X,\K]$ for its natural representation on $\ell^2(X,H)$.
\end{Def}

The following lemma is proved in \cite[Section 3]{GWY2008}, which shows that the maximal completion of $\mathbb{C}_u[X]$ (resp. $\IC[X,\K]$) is well-defined.

\begin{Lem}[\cite{GWY2008}]\label{maxnorm}
{\it
For all $s>0$ there exists a constant $C_s$ such that for all $T\in\IC_u^s[X]$ (resp. $T\in\IC[X,\K]$) and any $*$-representation $\pi:\mathbb{C}_u^s[X]\rightarrow B(H)$ (resp. $\pi:\IC[X,\K]\to\mathcal{B}(H)$), one has that
$$\|\pi(T)\|_{B(H)}\leq C_s \sup\limits_{x,y\in X}\|T(x,y)\|.$$
}
\end{Lem}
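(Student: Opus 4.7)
The plan is to decompose $T$ into a uniformly bounded number (depending only on $s$) of ``partial matching'' pieces via bounded geometry, and then exploit the automatic continuity of $*$-homomorphisms on a naturally arising diagonal $C^*$-subalgebra.

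Fix $s > 0$ and let $N_s := \sup_{x \in X} |B_X(x, s)|$, which is finite by bounded geometry. A direct colouring argument---pick for each $x \in X$ an injection $\phi_x : B_X(x, s) \to \{1, \ldots, N_s\}$ and colour the pair $(x, y)$ in $E_s := \{(x, y) \in X \times X : d(x, y) \leq s\}$ by $(\phi_x(y), \phi_y(x)) \in \{1, \ldots, N_s\}^2$---partitions $E_s$ into $N \leq N_s^2$ \emph{partial matchings} $E_1, \ldots, E_N$, meaning that for every $x \in X$ and every $i$ there is at most one $y$ with $(x, y) \in E_i$ and at most one $y$ with $(y, x) \in E_i$. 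For $T$ of propagation at most $s$, write $T_i(x, y) := T(x, y) \chi_{E_i}(x, y)$, so that $T = \sum_{i=1}^N T_i$ with $\sup_{x, y \in X} \|T_i(x, y)\| \leq \sup_{x, y \in X} \|T(x, y)\|$.

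The second step is to bound $\|\pi(T_i)\|$ for each partial matching $T_i$. The matching condition makes $T_i^* T_i$ \emph{diagonal}: the expression $(T_i^* T_i)(y, y') = \sum_z T_i(z, y)^* T_i(z, y')$ has at most one nonzero term (by the column constraint), and that term then forces $y' = y$ (by the row constraint). The diagonal $*$-subalgebra of $\IC_u^s[X]$, respectively of $\IC^s[X, \K]$, is itself a $C^*$-algebra under the sup norm---namely $\ell^\infty(X)$, respectively $\ell^\infty(X, \K)$---the $C^*$-identity $\|D^* D\|_\infty = \|D\|_\infty^2$ being immediate and completeness following componentwise. Since any $*$-homomorphism from a $C^*$-algebra into $B(H)$ is automatically norm-decreasing, one obtains
\[
\|\pi(T_i)\|^2 \; = \; \|\pi(T_i^* T_i)\| \; \leq \; \|T_i^* T_i\|_\infty \; \leq \; \Big( \sup_{x, y \in X} \|T(x, y)\| \Big)^2 .
\]

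Summing via the triangle inequality yields $\|\pi(T)\| \leq N_s^2 \sup_{x, y \in X} \|T(x, y)\|$, so $C_s := N_s^2$ works. The main obstacle in this plan is the interface in the second step: $\pi$ is merely a $*$-homomorphism of algebraic $*$-algebras and carries no a priori continuity hypothesis, so one cannot simply invoke any $C^*$-norm on $\IC_u^s[X]$ or $\IC^s[X, \K]$. The workaround is exactly to locate a $*$-subalgebra containing $T_i^* T_i$ which is genuinely complete in a natural sup norm; then the standard fact that $*$-homomorphisms of $C^*$-algebras into $B(H)$ are norm-decreasing transfers the sup-norm bound on $T_i^* T_i$ to an operator-norm bound on $\pi(T_i^* T_i)$.
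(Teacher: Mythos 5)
The paper does not reproduce a proof of this lemma—it cites it to \cite[Section~3]{GWY2008}—so the comparison is against the standard argument there (and its metric-space version in \cite{JR2013}). Your proof is correct and is essentially that standard argument: a bounded-geometry colouring decomposes any $T$ of propagation $\leq s$ into a controlled number of ``partial translations,'' and each of these is handled by passing to $T_i^*T_i$, which lies in the diagonal $C^*$-subalgebra $\ell^\infty(X)$ (resp.\ $\ell^\infty(X,\mathcal K)$) where automatic contractivity of $*$-homomorphisms out of a genuine $C^*$-algebra applies; the only cosmetic difference is that your two-coordinate colouring gives the constant $N_s^2$ rather than the $N_s$ obtainable from bipartite edge colouring, which is immaterial.
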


\begin{Def}\label{maxuniRoe}
The {\em maximal uniform Roe algebra} of $X$, denoted $C_{u,max}^*(X)$, is the completion of $\mathbb{C}_u[X]$ for the norm
	$$\|T\|=\sup\{\|\pi(T)\|_{B(H)}\mid \pi:\mathbb{C}_u[X]\rightarrow B(H) \text{ a $*$-representation }\}.$$
We can also define the {\em maximal Roe algebra}, denoted $C^*_{max}(X)$, analogously.
\end{Def}

The Roe algebra $C^*(X)$ and the uniform Roe algebra $C^*_u(X)$ are occasionally referred to as the {\em reduced Roe algebra} and the {\em reduced uniform Roe algebra}, respectively, so as to emphasize the distinctions from the maximal ones.

Next, we recall another relevant Roe-type $C^*$-algebra associated to a discrete metric space with bounded geometry, which was introduced in \cite{JR2013}.

\begin{Def}
Define $U\mathbb{C}[X]$ to be the $*$-algebra consisting of finite propagation $X$-by-$X$ matrices $(T(x,y))_{x,y\in X}$ with entries in $\mathcal{K}$ such that $$\sup\limits_{x,y\in X}\|T(x,y)\|<\infty  \text{ and } \sup\limits_{x,y\in X}\mbox{rank} (T(x,y))<\infty$$
where $\mbox{rank} ( T(x,y))$ is the rank of the compact operator $T(x,y)$.

$U\mathbb{C}[X]$ has a natural representation on $\ell^2(X)\otimes H$ via matrix multiplication. Define $UC^*(X)$ to be the operator norm closure of $U\mathbb{C}[X]$ on $B(\ell^2(X)\otimes H)$.
\end{Def}
Analogous to the maximal uniform Roe algebra, one can define $UC_{max}^*(X)$ to be the closure of $U\mathbb{C}[X]$ under the maximal norm by Lemma \ref{maxnorm}.

\begin{Rem}\label{Morita equivalence}
	J. \v{S}pakula and R. Willett \textup{\cite[Proposition 4.7]{JR2013}} constructed a pre-Hilbert $U\mathbb{C}[X]$-$\mathbb{C}_u[X]$-bimodule $E_{alg}$ consisting of all finite propagation $X$-by-$X$ matrices with uniformly bounded entries in $H$, and then established the Morita equivalence between the (maximal) uniform algebra and the (maximal) uniform Roe algebra via different completion of pre-Hilbert module $E_{alg}$. By a result of R.Exel in \cite{Exel1993}, a Morita equivalence bimodule $E$ induces an isomorphism $E_*$ on $K$-theory
	$$E_*: K_*(UC^*(X))\to K_*(C^*_{u}(X)),$$
	and the maximal counterpart
	$$E_*: K_*(UC^*_{\max}(X))\to K_*(C^*_{u, \max}(X)),$$
	without any assumptions on separability or $\sigma$-unitality.
		J. \v{S}pakula and R. Willett  \textup{\cite[Proposition 4.8]{JR2013}} also gave a description of the inverse of $E_*$ as follows. Fix a unit vector $\eta_0\in H$. Let $P_0\in\mathcal{K}(H)$ be a rank-one projection onto the subspace of $H$ spanned by $\eta_0$. Let $P:=\mathrm{diag}(P_x)\in \ell^\infty (X, \mathcal{K}(H))$ be the diagonal operator with $P_x=P_0$ at  every $(x,x)$-entry.
	Define an injective homomorphism $i_P:\mathbb{C}_u[X]\rightarrow U\mathbb{C}[X]$ as $(i_P(T))=P(T\otimes Id_H)$, for all $T=(T(x,y))\in \mathbb{C}_u[X]$. The $*$-homomorphisms $i_P$ induce an isomorphism on $K$-theory which is the inverse of $E_*$.
	
\end{Rem}

Finally, we recall the notion of coarse equivalence for families of metric spaces, and illustrate it by an example of a group extension.

Let $(X,d_X)$ and $(Y,d_Y)$ be proper metric spaces. A map $f:X\to Y$ is said to be a coarse embedding if there exist two non-decreasing functions $\rho_1,\rho_2:[0,\infty)\to [0,\infty)$ such that
\begin{enumerate}
	\item [(1)]$\lim\limits_{t\rightarrow \infty}\rho_1(t)=\infty;$
	\item [(2)]$\rho_1(d_X(x,y))\leq d_Y(f(x),f(y))\leq \rho_2(d_X(x,y))$ for all $x,y\in X$.
\end{enumerate}
A metric space $X$ is said to be {\em coarsely equivalent} to another metric space $Y$, if there exists a coarse
embedding $f:X\to Y$ such that $Y$ is equal to the $C$-neighborhood of the image $f(X)$ for some $C> 0$. 

Let $\{(X_i,d_{X_i})\}_{i\in I}$  and $\{(Y_i,d_{Y_i})\}_{i\in I}$ be families of metric spaces with uniform bounded geometry in the sense that for each $R>0$ there exists $M >0$ such that any ball of radius $R$ in each $X_i$ (resp. $Y_i$) contains at most $M$ points, which does not depend on $i$. 
\par
We say that $\{X_i\}_{i\in I}$ is {\em equi-coarsely equivalent} or {\em uniformly coarsely equivalent} to $\{Y_i\}_{i\in I}$ if there exists a sequence of maps $\{f_i:X_i\to Y_i\}_{i\in I}$, a constant $C>0$ and two  non-decreasing functions $\rho_1,\rho_2:[0,\infty)\to[0,\infty)$,  and such that
\begin{itemize}
	\item[(1)]$\lim\limits_{t\rightarrow \infty}\rho_1(t)=\infty;$
	\item[(2)]$\rho_1(d_{X_i}(x,y))\leq d_{Y_i}(f_i(x),f_i(y))\leq \rho_2(d_{X_i}(x_,y))$, for all $x,y\in X_i$ and $i\in I$.
	\item [(3)] $Y_i$ is equal to the $C$-neighborhood of the image $f_i(X_i)$ for each $i\in I$.
\end{itemize}

	


\begin{Exa}\label{group}
	Let $G$ be a countable discrete group, endowed with a left invariant metric via a proper length function on $G$. Assume that $N\lhd G$ is a normal subgroup of $G$ which is endowed with the induced metric, and $G/N$ is the quotient group which is endowed with the quotient metric. Let $\pi:G\to G/N$ be the canonical quotient map. It is a contractive map.
	
	For each $q\in G/N$, choose $g_q\in G$ such that $\pi(g_q)=q$, the fibre space of $q$ is exactly the coset $\pi^{-1}(q)=g_qN=Ng_q$. The left invariant metric of $G$ implies that the fibre $\{\pi^{-1}(q)\}$ is isometric to $\{\pi^{-1}(r)\}$ for any $q,r\in G/N$ by
	$$g_qN\to g_rN\mbox{,}\quad g_qn\mapsto g_rn.$$
	
	For any $R>0$, if $p,q\in G/N$ satisfy $d_{G/N}(p,q)<R$, then for any $x\in \pi^{-1}(p)$, there exists $y\in\pi^{-1}(q)$ such that $d_G(x,y)<R$. The existence of $y$ is guaranteed by the definition of the quotient metric. As a result, there exists $C\geq R$ such that $\pi^{-1}(q)$ is a $C$-net in $\pi^{-1}(B_{G/N}(q,R))$ for any $q\in G/N$. Hence the sequence of metric spaces $\{\pi^{-1}(B_{G/N}(q,R))\}_{q\in G/N}$ is uniformly coarse equivalent to $\{\pi^{-1}(q)\}_{q\in G/N}$. If $N$ has property A and $Q$ is coarsely embeddable into Hilbert space, then the map $\pi:G\to G/N$ provides $G$ with an A-by-CE coarse fibration. 
\end{Exa}

\section{The twisted (uniform) Roe algebra}\label{Section 3}

Let $X$ be a discrete metric with bounded geometry such that a map $p:X\to Y$ endows $X$ with an A-by-CE coarse fibration. In this section, we will construct the (maxima and reduced) twisted uniform Roe algebras and Roe algebras for $X$, with coefficients coming from the
coarse embedding of $Y$ into a Hilbert space $H$. The constructions of these algebras are  variants of those twisted algebras originally introduced in \cite{Yu2000}. We then prove that the canonical quotient maps from the
maximal twisted (uniform) Roe algebras to the reduced twisted (uniform) Roe algebras are isomorphisms.

Throughout this section, assume that $X$ and $Y$ are bounded geometry discrete  metric spaces with metric $d_X$ and $d_Y$, respectively. Let $H$ be a separable and infinite-dimensional Hilbert space. Moreover, we assume that $p:X \to Y$ satisfies the conditions in Theorem \ref{main result} and $f:Y\to H$ is a coarse embedding.

To get started, let us first recall a $C^*$-algebra associated with an infinite-dimensional Euclidean space introduced by N. Higson, G. Kasparov and J. Trout in \cite{HKT1998}. Let $V$ be a countably infinite-dimensional Euclidean space. Denote by $V_a, V_b$ the finite dimensional affine subspaces of $V$. Denoted by $V_a^0$ the finite dimensional linear subspace of $V$ consisting of differences of elements in $V_a$.  Let $\Cl(V_a^0)$ be the complexified Clifford algebra on $V_a^0$ and $\mathcal{C}(V_a)$ the graded $C^*$-algebra of continuous functions from $V_a$ to $\text{Cliff}(V_a^0)$ vanishing at infinity. Let $\mathcal{S}=C_0(\mathbb{R})$, graded according to even and odd functions. Define the graded tensor product
$$\mathcal{A}(V_a)=\mathcal{S}\wox\mathcal{C}(V_a).$$
If $V_a \subseteq V_b$, we have a decomposition $V_b=V_{ba}^0+ V_a$, where $V_{ba}^0$ is the orthogonal complement of $V_a^0$ in $V_b^0$. For each $v_b \in V_b$, we have a unique decomposition $v_b=v_{ba}+v_a$, where $v_{ba} \in V_{ba}^0$ and $v_a\in V_a$. Every function $h$ on $V_a$ can be extended to a function $\tilde{h}$ on $V_b$ by the formula $\tilde{h}(v_{ba}+v_a)=h(v_a)$.

\begin{Def}[\cite{HKT1998}] \leavevmode
    (1) If $V_a \subseteq V_b$, we define $C_{ba}$ to be the Clifford algebra-valued function $V_b \rightarrow \Cl(V_b^0)$, $v_b \mapsto v_{ba}\in  V_{ba}^0 \subset \Cl(V_b^0)$. Let $X$ be the function multiplication by $x$ on $\mathbb{R}$, considered as a degree one and unbounded multiplier of $\mathcal{S}$.  Define a homomorphism 
    $$\beta_{ba}: \mathcal{A}(V_a) \rightarrow \mathcal{A}(V_b)$$ 
    by
	$$\beta_{ba}(g\wox h)=g(X\wox 1+1\wox C_{ba})(1\wox\tilde{h})$$
for all $g\in \mathcal{S}$ and $h\in\mathcal{A}(V_a)$, and $g(X\wox 1+1\wox C_{ba})$ is defined to be the functional calculus of the unbounded, essentially self-adjoint multiplier $X\wox 1+1\wox C_{ba}$ by $g$.
\par
(2) We define a $C^*$-algebra $\mathcal{A}(V)$ by 
$$\mathcal{A}(V)=\lim\limits_{\longrightarrow}\mathcal{A}(V_a),$$
where the direct limit is taken over the directed set of all finite-dimensional affine subspaces $V_a\subset V$, using the homomorphism $\beta_{ba}$ in \emph{(1)}.
\end{Def}

\begin{Rem}\label{composition}
If $V_a \subset V_b \subset V_c$, then we have $\beta_{cb} \circ \beta_{ba}=\beta_{ca}$. Therefore, the above homomorphisms give a directed system $(\mathcal{A}(V_a))$ as $V_a$ ranges over finite dimensional affine subspaces of $V$.
\end{Rem}

The space $\mathbb{R}_+ \times H$ is equipped with a topology under which $\mathbb{R}_+ \times H$ is a locally compact topological space in such a way that a net $(t_i,v_i)$ in $\mathbb{R}_+ \times H$  converges to a point $(t,v)\in \mathbb{R}_+ \times H$ if and only if
\begin{itemize}
\item[(1)] $t_i^2+\|v_i\|^2 \rightarrow t^2+\|v\|^2$, as $i \rightarrow \infty;$
\item[(2)] $\langle v_i,u\rangle \rightarrow \langle v,u\rangle $ for any $u\in H$, as $i \rightarrow \infty.$
\end{itemize}
Note that for each $v\in H$ and each $r > 0$, $B(v,r)=\{(t,w)\in \mathbb{R}_+ \times H:t^2+\|v-w\|^2 <r^2\}$ is an open subset of $\mathbb{R}_+ \times H$. For finite dimensional subspaces $V_a \subseteq V_b \subset V$, since $\beta_{ba}$ takes $C_0(\mathbb{R}_+ \times V_a)$ into $C_0(\mathbb{R}_+ \times V_b)$, the $C^*$-algebra $\lim\limits_{\longrightarrow}C_0(\mathbb{R}_+ \times V_a)$ is $*$-isomorphic to $C_0(\mathbb{R}_+ \times H)$.

\begin{Def}
The support of an element $a\in \mathcal{A}(V)$ is the complement of all $(t,v)\in \mathbb{R}_+ \times H$ such that there exists $g\in C_0(\mathbb{R}_+ \times H)$ with $g(t,v) \neq 0 $ and $g\cdot a=0$.
\end{Def}

For any $x\in X$ and $n\in\IN$, define a finite-dimensional Euclidean subspace of $H$ as
$$W_n(p(x))=\mbox{span}\{f(y) \mid y\in Y\mbox{ and }d_Y(p(x),y)\leq n^2\}.$$
where $f:Y\to H$ is the coarse embedding. The bounded geometry property of $X$ implies that $W_n(p(x))$ is finite dimensional. Let $V=\cup_{n\in \mathbb{N}}W_n(p(x))$. Without loss of generality we assume that $V$ is dense in $H$.

\begin{Rem}\leavevmode
(1) Since $Y$ has bounded geometry, for each $n\in\mathbb{N}$, there exists $R>0$ such that $\dim(W_n(p(x)))\leq R$ for all $x\in X$.
(2)by the condition (1) of Definition \ref{A-by-CE coarse fibration}, for any $r>0$, there exists $s>0$ such that $d_X(p(x),p(y))<s$ for all $x,y\in Y$ with $d_Y(x,y)<r$, it follows that there exists $N>0$ such that $W_n(p(x))\subset W_{n+1}(p(y))$ for all $n \geq N$, $x,y \in X$ satisfying $d_X(x,y)\leq r.$
\end{Rem}

\begin{Def}\label{algebraic twisted uniform Roe algebra}
The {\em algebraic twisted uniform Roe algebra}, denoted $\mathbb{C}_u[X,\A]$,  consists of all functions $T$ on $Y \times Y\rightarrow \mathcal{A}(H)$ such that
\begin{itemize}
\item [(1)]there exists an integer $N$ such that $$T(x,y)\in (\beta_N(p(x)))(\mathcal{A}(W_N(p(x))))\subseteq \mathcal{A}(V)$$ for all $x,y\in X$, where $\beta_N(p(x)): \mathcal{A}(W_N(p(x)))\rightarrow \mathcal{A}(V)$ is the $*$-homomorphism associated to the inclusion of $W_N(p(x))$ into $V$;
\item [(2)]there exists $M>0$ such that $\|T(x,y)\|\leq M$ for all $x,y \in X;$
\item [(3)]there exists $r_1>0$ such that if $d_X(x,y)>r_1$, then $T(x,y)=0;$
\item [(4)]there exists $r_2>0$ such that $\mbox{supp}(T(x,y))\subseteq B(f(p (x)),r_2)$ for all $x,y\in X$ where $$B(f(p(x)),r_2)=\left\{(s,h)\in \mathbb{R}_+\times H\mid s^2+\|h-f(p(x))\|^2 < r_2^2\right\};$$
\item [(5)]there exists $c>0$ such that $D_Z(T_1(x,y))\in \mathcal{A}(W_N(p(x)))$ and $\|D_Z(T_1(x,y))\|\leq c$ for all $x,y \in X$ and $Z=(s,h)\in\mathbb{R}\times W_N(p(x))$ satisfying $\|Z\|=\sqrt{s^2+\|h\|^2}\leq 1$, where $\beta_N(x)(T_1(x,y))=T(x,y)$ and $D_Z(T_1(x,y))$ is the derivate of the function $T_1(x,y):\mathbb{R}\times W_N(p(x))\to\Cl(W_N(p(x)))$ in the direction of $Z$.
\item[(6)] for all $x,y\in X$, $T(x,y)$ can be written as the image under $\beta_N(p(x))$ of a finite linear combination of elementary tensors from $\A(W_N(p(x)))\cong \mathcal{S}\wox\mathcal{C}(W_N(p(x)))$.
	
\end{itemize}
\end{Def}

The algebraic twisted uniform Roe algebra $\IC_u[X,\A]$ is made into a $*$-algebra via matrix operations, together with the $*$-operations on $\A(V)$, i.e., for any $T,S\in\IC_u[X,\A]$, a product structure on $\IC_u[X,\A]$ is defined by
$$(TS)(x,y)=\sum_{z\in X}T(x,z)S(z,y)$$
and the adjoint $T^*$ of $T$ is defined by
$$T^*(x,y)=(T(y,x))^*.$$

Let 
$$E=\left\{\sum_{x\in X} a_x[x]\,\Big|\, a_x\in \mathcal{A}(V), \sum_{x\in X}a_x^*a_x \text{ converges in norm } \right\}.$$
The $\A(V)$-valued inner product and $\A(V)$-action are given respectively by
$$\left<\sum_{x\in X} a_x[x],\sum_{x\in X} b_x[x]\right>=\sum_{x\in X} a_x^*b_x\quad\mbox{and}\quad\left(\sum_{x\in X} a_x[x]\right)a=\sum_{x\in X} a_xa[x].$$
Then $E$ becomes a Hilbert $\A(V)$-module, and the action of $\mathbb{C}_u[X,\mathcal{A}]$ on $E$ is given by
$$T\left(\sum\limits_{x\in X} a_x[x]\right)=\sum\limits_{y\in X}\left(\sum_{x\in X} T(y,x)a_x[x]\right)[y],$$
where $T\in \mathbb{C}_u[X,\mathcal{A}]$ and $\sum\limits_{x\in X} a_x[x] \in E.$

\begin{Def}\label{twisted uniform Roe algebra}
The {\em twisted uniform Roe algebra} $C_u^*(X,\mathcal{A})$ is defined to be the operator norm closure of $\mathbb{C}_u[X,\mathcal{A}]$ in $\mathcal{B}(E)$, where $\mathcal{B}(E)$ is the $C^*$-algebra of all adjointable module homomorphisms from $E$ to $E$.
\end{Def}

One can define the universal norm on $\IC_u[X,\A]$ to be the supremum over all norms coming from $*$-representations to the bounded operators on a Hilbert space. It is well-defined by an analogue of Lemma \ref{maxnorm}.

\begin{Def}
	The {\em maximal twisted uniform Roe algebra} $C^*_{u,max}(X,\mathcal{A})$, is defined to be the completion of  $\mathbb{C}_u[X,\mathcal{A}]$ in the norm
$$\|T\|_{max}=\sup\left\{\|\phi (T)\|_{\mathnormal{B}(H_\phi)}\mid \phi:\IC_u[X,\mathcal{A}] \rightarrow \mathcal{B}(H_\phi) \text{ a } *\text{-representation}\right\}.$$
\end{Def}

By the universality of the maximal completion, we have the canonical quotient
$$\lambda: C_{u,max}^*(X,\mathcal{A}) \rightarrow C_u^*(X,\mathcal{A})$$
induced by the $*$-representation of $\mathbb{C}_u[X,\mathcal{A}]$ on the Hilbert module $E$.

Analogous to Definition \ref{algebraic twisted uniform Roe algebra}, we can define the {\em  algebraic twisted Roe algebra}, denoted by   $\IC[X,\mathcal{A}\hat{\otimes}\mathcal{K}]$, which consists of all functions 
$$T: Y \times Y\rightarrow \mathcal{A}(H)\hat{\otimes}\mathcal{K}$$ 
satisfying the conditions as in Definition \ref{algebraic twisted uniform Roe algebra}. 
Then we can similarly define the {\em maximal and reduced twisted Roe algebras}, denoted $C_{max}^*(X,\mathcal{A}\hat{\otimes}\mathcal{K})$ and $C^*(X,\mathcal{A}\hat{\otimes}\mathcal{K})$, respectively, as relevant completions of the  algebraic twisted Roe algebra.

In the rest part of this section, we aim to prove the following result.

\begin{Thm}\label{max-red}
{\it
The canonical quotient maps $$\lambda: C_{u,max}^*(X,\mathcal{A}) \rightarrow C_u^*(X,\mathcal{A})$$
and 
$$\lambda: C_{max}^*(X,\mathcal{A}\hat{\otimes}\mathcal{K}) \rightarrow C^*(X,\mathcal{A}\hat{\otimes}\mathcal{K})$$ are both isomorphisms.
}
\end{Thm}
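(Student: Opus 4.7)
The plan is to prove the theorem by constructing, for every $\e > 0$, a completely positive Schur-multiplier map on $\IC_u[X, \A]$ whose image lies inside a nuclear subalgebra (where the canonical quotient from maximal to reduced is automatically isometric) and which approximates the identity as $\e \to 0$. The construction would combine equi-property A of the fibres of $p$ (giving control in the $X$-direction) with the coarse embedding of $Y$ into $H$, which, via the support condition (4) of Definition \ref{algebraic twisted uniform Roe algebra}, automatically localises the matrix entries in the $Y$-direction.

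First, I would note that every $T \in \IC_u[X, \A]$ has bounded $Y$-propagation: condition (3) of Definition \ref{algebraic twisted uniform Roe algebra} combined with condition (1) of Definition \ref{A-by-CE coarse fibration} produces $s > 0$ with $T(x, y) \neq 0 \Longrightarrow d_Y(p(x), p(y)) \leq s$. Using bounded geometry of $Y$, for each $R \geq s$ I would fix a partition $\{U_\beta\}_{\beta \in \mathcal{B}}$ of $Y$ with $\mathrm{diam}(U_\beta) \leq R$ and set $N_\beta := p^{-1}(B_Y(U_\beta, R)) \subseteq X$. By condition (2) of Definition \ref{A-by-CE coarse fibration}, the family $\{N_\beta\}_{\beta \in \mathcal{B}}$ has equi-property A.

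Next, given $\e > 0$ and taking $r = r_1$, I would extract from equi-property A a uniform radius $S > 0$ and maps $\xi^{(\beta)} : N_\beta \to \ell^2(N_\beta)$ satisfying the four defining conditions. Choosing, for each $x \in X$, a preferred index $\beta(x) \in \mathcal{B}$ with $p(x) \in U_{\beta(x)}$, I would assemble a global map $\xi : X \to \ell^2(X)$ by $\xi_x := \xi^{(\beta(x))}_x$ (extended by zero off $N_{\beta(x)}$). The positive-type kernel $k(x, y) := \langle \xi_x, \xi_y \rangle$ would then satisfy $|k(x, y) - 1| < 2\e$ whenever $d_X(x, y) \leq r_1$ and $\beta(x) = \beta(y)$, and vanish unless $N_{\beta(x)} \cap N_{\beta(y)} \neq \emptyset$. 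The associated Schur multiplier $M_k(T)(x, y) := k(x, y) T(x, y)$ preserves $\IC_u[X, \A]$ (all conditions of Definition \ref{algebraic twisted uniform Roe algebra} are preserved by entrywise multiplication by a bounded scalar kernel), is completely positive with c.b.\ norm at most $1$, and hence extends to c.p.\ contractions on both $C_{u, \max}^*(X, \A)$ and $C_u^*(X, \A)$ that intertwine with $\lambda$. Moreover, $M_k(T) \to T$ in both norms as $\e \to 0$.

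Finally, I would observe that the image of $M_k$ is concentrated on the block-diagonal relation $\{(x, y) : N_{\beta(x)} \cap N_{\beta(y)} \neq \emptyset\}$ and consists of finite-$X$-propagation, finite-$H$-support elements; bounded geometry of $X$ then provides a uniform bound $N_S$ on the cardinality of $S$-balls, so the image embeds into the nuclear $C^*$-algebra $\ell^\infty(\mathcal{B}, M_{N_S}(\A))$, using that $\A = \A(H)$ is itself nuclear as a direct limit of tensor products of $\mathcal{S}$ with Clifford algebras. Since the ambient $C^*$-algebra is nuclear, $\lambda$ is isometric on this image, so $\|M_k(T)\|_{\max} = \|M_k(T)\|$; passing to the limit $\e \to 0$ yields $\|T\|_{\max} = \|T\|$ for every $T \in \IC_u[X, \A]$. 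The argument for $C_{\max}^*(X, \A \wox \K) \to C^*(X, \A \wox \K)$ would be formally identical, with the compact factor $\K$ absorbed into the (still nuclear) coefficient algebra. I expect the hardest part to be the precise factorisation of the image of $M_k$ through a nuclear subalgebra compatibly with both completions -- in particular, carefully handling the overlaps between the thickened fibres $N_\beta$ and verifying that the positive-type kernel $k$ really does produce a completely positive extension on the \emph{maximal} completion, rather than only on the reduced side where such extensions are standard.
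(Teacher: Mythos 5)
Your overall strategy --- use equi-property A of the thickened fibres to build Schur multipliers, and exploit the support condition (4) of Definition~\ref{algebraic twisted uniform Roe algebra} to localise in the $Y$-direction --- is indeed what drives the paper's proof. But the execution as written has two genuine gaps, and the paper handles both by going through intermediate reductions that your proposal skips.

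\textbf{Gap 1: the global kernel $k$ does not converge to $1$ near the diagonal.} You build $\xi_x := \xi^{(\beta(x))}_x$ by choosing one index $\beta(x)$ with $p(x) \in U_{\beta(x)}$. Equi-property A only controls $\|\xi^{(\beta)}_x - \xi^{(\beta)}_y\|$ when $x$ and $y$ are assigned to the \emph{same} $\beta$. For $x,y$ close in $X$ but with $\beta(x) \neq \beta(y)$ (unavoidable near the boundaries of the partition $\{U_\beta\}$), the vectors $\xi^{(\beta(x))}_x$ and $\xi^{(\beta(y))}_y$ come from two unrelated property-A families, and nothing forces $\langle \xi_x, \xi_y\rangle$ to be near $1$. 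You acknowledge this restriction yourself ("whenever $\beta(x)=\beta(y)$"), but the consequence is fatal: $M_k(T) \to T$ fails in both norms, so you cannot pass to the limit. Moreover, the relation $\{(x,y): N_{\beta(x)} \cap N_{\beta(y)} \neq \emptyset\}$ is \emph{not} block-diagonal --- the thickened fibres $N_\beta$ overlap for adjacent $\beta$'s --- so the image of $M_k$ does not decompose as a product over $\mathcal{B}$ as you claim. The paper avoids both problems by first decomposing the open set $O_r = \bigcup_{y\in Y} B(f(y),r)$ into finitely many pieces $O_{r,k}$, each a disjoint union of $r$-separated balls, using the coarse embedding $f$ and bounded geometry of $Y$. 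On each $O_{r,k}$ the twisted algebra genuinely splits over the disjoint balls (Proposition~\ref{local iso}), and the finitely many pieces are glued back with a Mayer--Vietoris/pushout argument for ideals (Lemma~\ref{idealdecom}). Your proposal has no analogue of this reduction, and without it the boundary problem cannot be avoided.

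\textbf{Gap 2: the nuclearity argument is incorrect.} You argue that the image of $M_k$ embeds into $\ell^\infty(\mathcal{B}, M_{N_S}(\A))$ and that this algebra is nuclear, hence $\lambda$ is isometric on the image. Neither claim holds. The $\ell^\infty$-product of infinitely many copies of $\A = \A(H)$ is \emph{not} nuclear: $\A(H) = \varinjlim \mathcal{S} \wox \mathcal{C}(V_a)$ has irreducible representations of unbounded dimension (the Clifford factors contribute matrix algebras of dimension $2^{\dim V_a^0}$), so it is not subhomogeneous, and an infinite $C^*$-product of such an algebra fails to be nuclear. More fundamentally, even if the image sat in a nuclear $C^*$-algebra, that would not by itself imply that the maximal and reduced completions of the \emph{dense $*$-subalgebra} $\IC_u[X,\A]$ agree on the image; nuclearity concerns tensor products, not uniqueness of $C^*$-norms on a subalgebra. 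The correct substitute, and what the paper actually uses, is the Gong--Wang--Yu/\v{S}pakula--Willett estimate (Lemma~\ref{maxnorm}): for elements of propagation $\leq s$, \emph{every} $*$-representation is bounded by $C_s \sup_{x,y}\|T(x,y)\|$. Combined with the fact that bounded-propagation elements are closed in the maximal completion (Lemma~\ref{closed}), this shows $\lambda$ is injective on the image of the Schur multipliers without any appeal to nuclearity. Replacing your nuclearity appeal with this propagation estimate is essential, but it only works once Gap 1 has been repaired so that the Schur multipliers really do converge point-norm to the identity.
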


To begin with, we analyze ideals of the twisted (uniform) Roe algebras associated with open subsets of $\IR_+\times H$.

\begin{Def}\leavevmode
(1) The support of an element $T$ in $\IC_u[X,\mathcal{A}]$ is defined to be
		$$\left\{(x,y,t,h)\in X\times X \times \mathbb{R}_+ \times H\mid(t,h)\in \supp(T(x,y))\right\}.$$
(2) Let $O$ be an open subset of  $\mathbb{R}_+ \times H$. Define $C_u[X,\mathcal{A}]_{O}$ to be the $*$-subalgebra of $\mathbb{C}_u[X,\mathcal{A}]$ consisting of all elements whose supports are contained in $X\times X \times \mathnormal{O}$, i.e.,
		$$\mathbb{C}_u[X,\mathcal{A}]_{O}=\left\{T\in \IC_u[X,\mathcal{A}]\mid\supp(T(x,y))\subseteq \mathnormal{O}, \forall x,y\in X\right\}.$$
\end{Def}

Then $\mathbb{C}_u[X,\mathcal{A}]_{O}$ is a $*$-subalgebra of the maximal twisted uniform Roe algebra $C^*_{u,max}(X,\A)$. Denoted by $C_{u,\phi}^*(X,\mathcal{A})_{O}$ the norm closure of $\mathbb{C}[X,\mathcal{A}]_{O}$ under the norm in $C_{u,max}^*(X,\mathcal{A})$ (which is not the maximal norm completion of $\mathbb{C}_u[X,\mathcal{A}]_{O}$). We can also view $\mathbb{C}_u[X,\mathcal{A}]_{O}$ as a $*$-subalgebra of the reduced twisted uniform Roe algebra $C^*_u(X,\A)$. Denoted by $C_u^*(X,\mathcal{A})_{O}$ the norm closure of $\mathbb{C}_u[X,\mathcal{A}]_{O}$ under the norm in $C_u^*(X,\mathcal{A})$. Then the canonical quotient map restricts to a $*$-homomorphism
$$\lambda:C_{u,\phi}^*(X,\mathcal{A})_{O}\to C_u^*(X,\mathcal{A})_{O}.$$

We have the following lemma about the ideals, which can be proved in a similar way to \cite[Lemma 6.3]{Yu2000}.

\begin{Lem}\label{idealdecom}
{\it 
Let $B(f(p(x)),r)=\left\{(t,h)\in \mathbb{R}_+ \times H: t^2+\|h-f(p(x))\|< r^2\right\}$ for each $r> 0$ and $x\in X$. Let $X_{i,j}$ and $X'_{i,j}$ be subsets of $X$ for all $1\leq i \leq i_0$ and $1\leq j \leq j_0$, $\mathnormal{O}_{r,j}=\mathop\cap_{i=1}^{i_0}(\mathop \cup_{x \in X_{i,j}}B(f(p(x)),r))$ and $\mathnormal{O}'_{r,j}=\mathop\cap_{j=1}^{j_0}(\mathop \cup_{x \in X'_{i,j}}B(f(p(x)),r))$, then for each $r_0>0$ we have
\begin{itemize}
    \item [(1)] $\lim\limits_{r<r_0,r\rightarrow r_0}C_u^*(X,\mathcal{A})_{O_{r,j}\cup O'_{r,j}}=\lim\limits_{r<r_0,r\rightarrow r_0}C_u^*(X,\mathcal{A})_{O_{r,j}}+\lim\limits_{r<r_0,r\rightarrow r_0}C_u^*(X,\mathcal{A})_{O'_{r,j}},$
\item [(2)] $\lim\limits_{r<r_0,r\rightarrow r_0}C_u^*(X,\mathcal{A})_{O_{r,j}\cap O'_{r,j}}=\lim\limits_{r<r_0,r\rightarrow r_0}C_u^*(X,\mathcal{A})_{O_{r,j}}\cap \lim\limits_{r<r_0,r\rightarrow r_0}C_u^*(X,\mathcal{A})_{O'_{r,j}},$
\item [(3)] $\lim\limits_{r<r_0,r\rightarrow r_0}C_{u,\phi}^*(X,\mathcal{A})_{O_{r,j}\cup O'_{r,j}}=\lim\limits_{r<r_0,r\rightarrow r_0}C_{u,\phi}^*(X,\mathcal{A})_{O_{r,j}}+\lim\limits_{r<r_0,r\rightarrow r_0}C_{u,\phi}^*(X,\mathcal{A})_{O'_{r,j}},$
\item [(4)] $\lim\limits_{r<r_0,r\rightarrow r_0}C_{u,\phi}^*(X,\mathcal{A})_{O_{r,j}\cap O'_{r,j}}=\lim\limits_{r<r_0,r\rightarrow r_0}C_{u,\phi}^*(X,\mathcal{A})_{O_{r,j}}\cap \lim\limits_{r<r_0,r\rightarrow r_0}C_{u,\phi}^*(X,\mathcal{A})_{O'_{r,j}}.$
\end{itemize}
}
\end{Lem}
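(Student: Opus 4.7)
My plan is to follow the strategy of \cite[Lemma 6.3]{Yu2000} and adapt it to the twisted setting, treating the reduced assertions (1)--(2) and the maximal assertions (3)--(4) together because the key cut-off operation is bounded in every $*$-representation of $\mathbb{C}_u[X,\mathcal{A}]$ with the same constant. The common tool is that any bounded continuous function $\phi:\mathbb{R}_+\times H\to[0,1]$ induces a central multiplier of each $\mathcal{A}(W_N(p(x)))\cong\mathcal{S}\wox\mathcal{C}(W_N(p(x)))$ by pointwise scalar multiplication; since scalar multiplication commutes with the Clifford functional calculus $g(X\wox 1+1\wox C_{ba})$ defining the Bott maps $\beta_N(p(x))$, it passes to a central multiplier of $\mathcal{A}(V)$. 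Hence $\phi$ acts entrywise on $T\in\mathbb{C}_u[X,\mathcal{A}]$, producing $\phi\cdot T\in\mathbb{C}_u[X,\mathcal{A}]$ with support $\supp(T)\cap(X\times X\times\supp(\phi))$, and centrality gives $\|\pi(\phi\cdot T)\|\le\|\phi\|_\infty\|\pi(T)\|$ for every $*$-representation $\pi$. Thus $T\mapsto\phi\cdot T$ extends to a bounded operator of norm at most $\|\phi\|_\infty$ on both $C^*_u(X,\mathcal{A})$ and $C^*_{u,max}(X,\mathcal{A})$.

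The inclusions ``$\supseteq$'' in (1), (3) and ``$\subseteq$'' in (2), (4) are immediate from taking closures and direct limits of the set-theoretic containments $\mathbb{C}_u[X,\mathcal{A}]_{O_{r,j}}+\mathbb{C}_u[X,\mathcal{A}]_{O'_{r,j}}\subseteq\mathbb{C}_u[X,\mathcal{A}]_{O_{r,j}\cup O'_{r,j}}$ and $\mathbb{C}_u[X,\mathcal{A}]_{O_{r,j}\cap O'_{r,j}}\subseteq\mathbb{C}_u[X,\mathcal{A}]_{O_{r,j}}\cap\mathbb{C}_u[X,\mathcal{A}]_{O'_{r,j}}$. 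For the non-trivial direction of (1)/(3), I would fix $T$ in the left-hand side and $\varepsilon>0$, approximate it in the appropriate norm by some $S\in\mathbb{C}_u[X,\mathcal{A}]_{O_{r,j}\cup O'_{r,j}}$ with $r<r_0$, pick $r''\in(r,r_0)$, and take a continuous partition of unity $\{\phi_1,\phi_2\}$ on $\mathbb{R}_+\times H$ with $\supp(\phi_1)\subseteq O_{r'',j}$, $\supp(\phi_2)\subseteq O'_{r'',j}$, and $\phi_1+\phi_2\equiv 1$ on a neighborhood of the third-coordinate projection of $\supp(S)$. Then $S=\phi_1\cdot S+\phi_2\cdot S$ decomposes $S$ into summands in the required ideals at scale $r''<r_0$.

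For the non-trivial direction of (2)/(4), given $T$ in the right-hand side and $\varepsilon>0$, I would choose $T_1\in\mathbb{C}_u[X,\mathcal{A}]_{O_{r_1,j}}$ and $T_2\in\mathbb{C}_u[X,\mathcal{A}]_{O'_{r_2,j}}$ with $r_1,r_2<r_0$ and $\|T-T_i\|<\varepsilon/3$, so $\|T_1-T_2\|<2\varepsilon/3$. Pick $r''\in(r_2,r_0)$ and a continuous $\phi:\mathbb{R}_+\times H\to[0,1]$ with $\phi\equiv 1$ on $O'_{r_2,j}$ and $\supp(\phi)\subseteq O'_{r'',j}$. Since $\phi\cdot T_2=T_2$,
$$\|\phi\cdot T_1-T_1\|=\|\phi\cdot(T_1-T_2)+T_2-T_1\|\le(1+\|\phi\|_\infty)\|T_1-T_2\|<4\varepsilon/3,$$
while $\phi\cdot T_1\in\mathbb{C}_u[X,\mathcal{A}]_{O_{r_1,j}\cap O'_{r'',j}}$. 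With $r:=\max\{r_1,r''\}<r_0$, this gives $\|T-\phi\cdot T_1\|<5\varepsilon/3$ and $\phi\cdot T_1\in\mathbb{C}_u[X,\mathcal{A}]_{O_{r,j}\cap O'_{r,j}}$; letting $\varepsilon\to 0$ finishes the reduced case, and the same argument works verbatim in the maximal norm because the cut-off is norm-bounded in every $*$-representation.

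The main obstacle is the rigorous verification that a bounded continuous scalar function $\phi$ on $\mathbb{R}_+\times H$ yields a well-defined central multiplier of $\mathcal{A}(V)$ compatibly with the direct-limit/Bott-map structure of Definition~\ref{algebraic twisted uniform Roe algebra}, so that $T\mapsto\phi\cdot T$ genuinely restricts to $\mathbb{C}_u[X,\mathcal{A}]$ with the claimed support cut-off and norm estimate uniformly across all $*$-representations. This rests on the compatibility of pointwise scalar multiplication with the Clifford functional calculus inside $\beta_{ba}$, and is essentially the analogue of the step carried out in \cite{HKT1998} and in the proof of \cite[Lemma 6.3]{Yu2000}; once it is in hand, the four assertions follow from the partition-of-unity decomposition in the union case and the approximation argument in the intersection case.
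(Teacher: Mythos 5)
Your proposal is correct and follows essentially the same route as the paper, which itself gives no written proof but simply defers to the cut-off/partition-of-unity argument of \cite[Lemma 6.3]{Yu2000}; your central-multiplier formulation of that argument, with the limit over $r<r_0$ providing the room needed for the bump functions, is a faithful expansion of it. You also correctly identify the one genuinely delicate point, namely that the cut-off functions must be chosen (as in \cite{Yu2000}, from the coordinate functions $(t,h)\mapsto t^2+\|h-f(p(x))\|^2$) so that multiplication by them is compatible with the Bott maps $\beta_{ba}$ and preserves conditions (1), (5), (6) of Definition \ref{algebraic twisted uniform Roe algebra}.
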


\medskip
\par
\begin{Def}\label{r-sep}
Let $r>0$. A family of open subsets $\{O_{r,i}\}_{i\in J}$ is said to be {\em $r$-separate} if:
\begin{itemize}
 \item[(1)] for each $i\in J$, there exists $x_i\in X$ such that $\mathnormal{O}_{r,i} \subseteq B(f(p(x_i)),r)$, where
 $$B(f(p(x_i)),r)=\left\{(t,h)\in \mathbb{R}_+\times H: t^2+\|h-f(p(x_i))\|^2<r^2\right\},$$
 and $f$ is the coarse embedding $f:Y\rightarrow H$;
 \item[(2)] $\mathnormal{O}_{r,i} \cap \mathnormal{O}_{r,j} =\emptyset$ if $i\neq j.$
\end{itemize}

\end{Def}

\begin{Pro}\label{local iso}
{\it 
Let $r>0$. 
If $\{O_{r,i}\}_{i\in J}$ is a family of $r$-separate open subsets of $\mathbb{R}_+\times H$, then the canonical quotient map
$$\lambda: C_{u,\phi}^*(X,\mathcal{A})_{O_r} \rightarrow C_u^*(X,\mathcal{A})_{O_r} $$
is an isomorphism, where $O_r$ is the disjoint union of $\{O_{r,i}\}_{i\in J}$.
}
\end{Pro}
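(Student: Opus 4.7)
The strategy is to exploit the $r$-separation to decompose $C^*_{u,\phi}(X,\mathcal{A})_{O_r}$ into an orthogonal family of localized pieces indexed by $J$, and then use equi-property A to establish max $=$ reduced on each piece uniformly in $i$.

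First, because the $O_{r,i}$ are pairwise disjoint, each $T\in\mathbb{C}_u[X,\mathcal{A}]_{O_r}$ admits a canonical decomposition $T(x,y)=\sum_{i\in J}T_i(x,y)$ with $\supp(T_i(x,y))\subseteq O_{r,i}$. Since elements of $\mathcal{A}(V)$ whose supports lie in disjoint open subsets of $\mathbb{R}_+\times H$ multiply to zero pointwise, the pieces satisfy $T_iT_j=T_iT_j^*=T_i^*T_j=0$ for $i\neq j$, so they are pairwise orthogonal. Moreover, if $T_i(x,y)\neq 0$, then condition~(4) of Definition~\ref{algebraic twisted uniform Roe algebra} together with $O_{r,i}\subseteq B(f(p(x_i)),r)$ forces $\|f(p(x))-f(p(x_i))\|<r+r_2$. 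The coarse embedding $f:Y\to H$ then yields a constant $R_0=R_0(r,r_2,f)$, independent of $i$, such that $d_Y(p(x),p(x_i))\leq R_0$; hence each $T_i$ has matrix support contained in $X_i\times X_i$, where $X_i:=p^{-1}(B_Y(p(x_i),R_0))$.

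Second, by Definition~\ref{A-by-CE coarse fibration}(2) the family $\{X_i\}_{i\in J}$ has equi-property A. Adapting the method of \cite[Proposition 3.5]{JR2013} to the twisted setting, the equi-property A vectors $\xi^{(i)}:X_i\to\ell^2(X_i)$ produce completely positive Schur-multiplier maps $\phi^{(i)}_\epsilon$ on each $\mathbb{C}_u[X,\mathcal{A}]_{O_{r,i}}$, acting only on the spatial matrix indices and trivially on the $\mathcal{A}(V)$-entries, which approximate the identity in norm on elements of any fixed propagation to within $\epsilon$ and whose norms and error bounds are uniform in $i$. Since the same maps define contractive approximations with respect to both the maximal and the reduced norms, this yields that the canonical quotient $\lambda_i:C^*_{u,\phi}(X,\mathcal{A})_{O_{r,i}}\to C^*_u(X,\mathcal{A})_{O_{r,i}}$ is an isometric isomorphism for every $i\in J$.

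Finally, the orthogonality from the first step forces every $*$-representation of $\mathbb{C}_u[X,\mathcal{A}]_{O_r}$ to split along $J$: for $T=\sum_i T_i$ the product $TT^*=\sum_i T_iT_i^*$ is a sum of mutually orthogonal positive elements, so $\|T\|^2=\sup_i\|T_i\|^2$ in both the max and the reduced completions. Combined with the previous step, $\|T\|_{\max}=\|T\|_{\mathrm{red}}$, so $\lambda$ is an isomorphism. I expect the principal obstacle to lie in the second step: producing Schur-multiplier CP approximations on the twisted algebra that simultaneously respect the $\mathcal{A}(V)$-coefficient structure, the support and propagation constraints of Definition~\ref{algebraic twisted uniform Roe algebra} (notably conditions (1) and (5), which interact with the inclusions $W_N(p(x))\subseteq W_{N+1}(p(y))$), and the uniformity in $i$. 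Equi-property A provides the data on the matrix indices, but one must verify that the induced operators remain in $\mathbb{C}_u[X,\mathcal{A}]_{O_{r,i}}$ and converge in operator norm with bounds depending only on the propagation parameter and on $\epsilon$.
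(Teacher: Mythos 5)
Your strategy matches the paper's in all essentials: decompose along the disjoint $O_{r,i}$'s, use the coarse embedding of $Y$ to land the matrix support of each piece in a ball preimage $p^{-1}(B_Y(p(x_i),m))$, then exploit equi-property A via Schur multipliers to identify maximal and reduced norms. The organizational difference is that the paper does not argue piece-by-piece and then glue via orthogonality; instead it packages the whole family into a single $*$-algebra $\IA[X_i,i\in I]$ (a subalgebra of $\prod_i \IC_u[X_i]\wod \A_{O_{r,i}}$ with uniform propagation and bound conditions), which is represented on $\bigoplus_i\ell^2(X_i)\wox \A_{O_{r,i}}$, proves $\lambda$ is an isomorphism for this bundled algebra (Proposition~\ref{equi-iso}) using uniformly-defined Schur multipliers, and then writes $\IC_u[X,\A]_{O_r}=\bigcup_m\IA[p^{-1}(B(p(x_i),m)),i\in I]$ and passes to the direct limit over $m$. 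Your explicit pointwise orthogonality $T_iT_j=T_iT_j^*=T_i^*T_j=0$ and the resulting $\|T\|=\sup_i\|T_i\|$ is implicit in that bundled representation; making it explicit is fine, but the bundled algebra handles uniformity in $i$ and the exhaustion over $m$ simultaneously and cleanly, whereas a per-$i$ argument has to track the $m$-exhaustion (since $r_2$ in condition~(4) of Definition~\ref{algebraic twisted uniform Roe algebra} depends on $T$, so the matrix support of elements of $\IC_u[X,\A]_{O_{r,i}}$ is not bounded).

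One step in your write-up is too quick: ``Since the same maps define contractive approximations with respect to both the maximal and the reduced norms, this yields that $\lambda_i$ is an isometric isomorphism.'' Having a u.c.p.\ approximate identity on both sides is not by itself enough to conclude injectivity of $\lambda$. The missing ingredient (Lemma~\ref{closed} in the paper) is that the Schur multipliers compress any element into a fixed-propagation band $\mathbb{A}_u^{2S}[\cdot]$, that this band is a \emph{closed} subspace of the maximal completion on which $\lambda$ is injective (by the a priori norm bound of Lemma~\ref{maxnorm}), and that therefore $T\in\ker\lambda$ forces $M_{k_n}^{\max}(T)=0$ for all $n$ and hence $T=0$. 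You correctly flag the construction of twisted Schur multipliers as the technical crux, but this closedness/injectivity step is the one that actually closes the argument, and it deserves to be stated.
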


We will prove Proposition \ref{local iso} in the next half of this section. Granting Proposition \ref{local iso} for the moment, we are able to prove Theorem \ref{max-red}.

\begin{proof}[Proof of Theorem \ref{max-red}]
For any $r>0$, we define $O_r\subseteq \mathbb{R}_+\times H$ by
$$O_r=\bigcup_{y \in Y}B(f(y),r),$$
where $f:Y \rightarrow H$ is the coarse embedding and
$$B(f(y),r)=\{(t,h)\in \mathbb{R}_+\times H:t^2+\|h-f(y)\|^2<r\}.$$
By the definition of twisted uniform Roe algebra, we have that
$$C_u^*(X, \mathcal{A})=\lim\limits_{r\rightarrow \infty}C_u^*(X,\mathcal{A})_{\mathnormal{O_r}} \quad \text{ and }\quad C_{u,max}^*(X, \mathcal{A})=\lim\limits_{r\rightarrow \infty}C_{u,\phi}^*(X,\mathcal{A})_{\mathnormal{O_r}}.$$
So it suffices to show that for any $r>0$,
$$\lambda: C^*_{u,\phi}(G, \mathcal{A})_{\mathnormal{O_r}}\rightarrow C_u^*(X, \mathcal{A})_{\mathnormal{O_r}}$$
is an isomorphism.

Since $Y$ has bounded geometry and $f:Y\rightarrow H$ is a coarse embedding, then for given $r>0$, there exist finitely many, say $k_r$, mutually disjoint subsets $Y_k$ of $Y$ such that $Y=\bigsqcup_{k=1}^{k_r}Y_k$, where for each k, $d(f(y_i^k),f(y_j^k))>3r$ for distinct element $y_i^k,y_j^k$ in $Y_k$.
	
	Let $\mathnormal{O_{r,k}}=\bigsqcup_{y^k\in X_k}B(f(y^k),r)$. Then $\mathnormal{O_r}=\bigcup_{k=1}^{k_r}\mathnormal{O_{r,k}}$.
	By Lemma \ref{idealdecom}, we know that the following diagrams
	
	\begin{equation*}
	\xymatrix {
	\lim\limits_{r<r_0,r\rightarrow r_0}C_u^*(X,\mathcal{A})_{\cup_{k=1}^{n}\mathnormal{O_{r,k}}\cap \mathnormal{O_{r,n+1}}}\ar[r]\ar[d] & \lim\limits_{r<r_0,r\rightarrow r_0}C_u^*(X,\mathcal{A})_{\cup_{k=1}^{n}\mathnormal{O_{r,k}}}\ar[d]\\
	  \lim\limits_{r<r_0,r\rightarrow r_0}C_u^*(X,\mathcal{A})_{\mathnormal{O_{r,n+1}}}\ar[r] & \lim\limits_{r<r_0,r\rightarrow r_0}C_u^*(X,\mathcal{A})_{\cup_{k=1}^{n+1}\mathnormal{O_{r,k}}}
}
	\end{equation*}
and
\begin{equation*}
\xymatrix {
	\lim\limits_{r<r_0,r\rightarrow r_0}C_{u,\phi}^*(X,\mathcal{A})_{\cup_{k=1}^{n}\mathnormal{O_{r,k}}\cap \mathnormal{O_{r,n+1}}}\ar[r]\ar[d] & \lim\limits_{r<r_0,r\rightarrow r_0}C_{u,\phi}^*(X,\mathcal{A})_{\cup_{k=1}^{n}\mathnormal{O_{r,k}}}\ar[d]\\
  \lim\limits_{r<r_0,r\rightarrow r_0}C_{u,\phi}^*(X,\mathcal{A})_{\mathnormal{O_{r,n+1}}}\ar[r] & \lim\limits_{r<r_0,r\rightarrow r_0}C_{u,\phi}^*(X,\mathcal{A})_{\cup_{k=1}^{n+1}\mathnormal{O_{r,k}}}
}
\end{equation*}
are both pushout diagrams. Since every $O_{r,k}$ and $O_{r,n}\cap (\cup_{k=1}^{n}\mathnormal{O_{r,k}})$ are $r$-separate for $1\leq n\leq k_r$, and the $C^*$-norm on a pushout is uniquely determined by the norms on the other three $C^*$-algebras in the diagram
	defining it, by Proposition \ref{local iso}, it follows that
	the canonical quotient map $\lambda: C_{u,max}^*(X,\mathcal{A}) \rightarrow C_u^*(X,\mathcal{A})$ is an isomorphism.  The isomorphism from $C_{max}^*(X,\mathcal{A}\hat\otimes\mathcal{K})$ to $ C^*(X,\mathcal{A}\hat\otimes\mathcal{K})$ can be proved similarly.
\end{proof}

We need some preparations before we can prove Proposition \ref{local iso}. We shall first introduce two $C^*$-algebras for a collection of metric spaces.

Let $\{O_{r,i}\}_{i\in I}$ and $\{x_i\}_{i\in I}$ be as in Definition \ref{r-sep}, and let $\{X_i\}_{i\in I}$ be a collection of subspaces of $X$ with equi-property $A$ such that $x_i\in X_i$ for every $i\in I$. Define a $*$-algebra $\IA[X_i,i\in I]$ to be the subalgebra of $\prod\limits_{i\in I}\mathbb{C}_u[X_i]\wod \mathcal{A}_{O_{r,i}}$ consisting of all elements $\oplus_{i\in I}T_i$ satisfying the follow conditions:
\begin{enumerate}
\item[(1)]there exists $S>0$, such that $\mbox{propagation}(T_i)<S$ for all $i\in I$;
\item[(2)]there exists $M>0$, such that $\sup_{i}\sup_{x,y\in X_i}\|T_i(x,y)\|<M$;
\item[(3)]there is $T_{1,i}\in \mathbb{C}_u[X_i]\wod\mathcal{A}(W_N(x_i))$ for which $(\beta_N(x_i))(T_{1,i})=T_i$;
\item[(4)]$D_Z(T_{1,i})$ exists in $\mathbb{C}_u[X_i]\wod\mathcal{A}(W_N(x_i))$ and there exists $c>0$ such that $\|D_Z(T_{1,i})\|\leq c$ for all $Z=(t,h)\in \mathbb{R}_+\times W_N(x_i)$ satisfying $\|Z\|\leq 1$, where $\beta_N(x_i)$ is the $*$-homomorphism:
$$\mathbb{C}_u[X_i]\wod\mathcal{A}(W_N(x_i)) \rightarrow \mathbb{C}_u[X_i]\wod\mathcal{A}(V)$$
induced by the inclusion of $W_N(x_i)$ into $V$.
\end{enumerate}

Notice that $\IA[X_i,i\in I]$ has a natural $*$-representation on $\oplus_{i\in I}\ell^2(X_i)\wox\A_{O_{r,i}}$ by matrix multiplication. Define $A_u^*(X_i,i\in I)$ to be the completion of $\IA[X_i,i\in I]$ under the norm induced by the $*$-representation as above. Since the sequence $\{X_i\}_{i\in I}$ has bounded geometry, it is reasonable to define the maximal completion of $\IA[X_i,i\in I]$, denoted by $A^*_{u,max}(X_i,i\in I)$, as the completion under the norm
$$\|(T)\|=\sup\left\{\|\pi(T)\|_{B(H)}\,\Big|\, \pi:\IA[X_i,i\in I]\rightarrow B(H) \text{ a $*$-representation }\right\}.$$

Since $\{X_i\}_{i\in I}$ has equi-property A, for any $R>0$, $\e>0$, there exist $S>0$ and a family of maps $\xi^{(i)}: X_i \rightarrow \ell^2(X_i)$ such that
(1) for all $x,y\in X_i$, $\xi^{(i)}_x(y)\in [0,1]$;
(2) for all $x\in X_i$, $\|\xi^{(i)}_x\|=1$;
(3) $\sup\{\|\xi^{(i)}_x-\xi^{(i)}_y\|\mid d(x,y)\leq R, x,y \in X_i\}<\e$;
(4) for all $x\in X_i$, $\mbox{supp} (\xi^{(i)}_x)\subset B(x,S)$. 

The family of maps $\{\xi^{(i)}\}_{i\in I}$ induce:
\begin{itemize}
\item[(i)] a family of partitions of unity $\{\phi^{(i)}_y\}_{y\in X_i}$ on $X_i$ defined by $\phi^{(i)}_y(x)=\xi^{(i)}_x(y)$;
\item[(ii)] a family of kernels $k^{(i)}: X_i \times X_i \rightarrow [0,1]$ defined by $k^{(i)}(x,y)=\langle\xi^{(i)}_x,\xi^{(i)}_y\rangle$;
\item[(iii)] a family of Schur multipliers $M^{alg}_{k^{(i)}}: \mathbb{C}_u[X_i] \rightarrow \mathbb{C}_u[X_i]$ defined by $$(M^{alg}_{k^{(i)}}T)(x,y)=k^{(i)}(x,y)T(x,y).$$	
\end{itemize}

For $(T_i)_{i\in I}=\left(\sum_{m=1}^{n}T_{i,m}\wox a_{i,m}\right)_{i\in I} \in \IA[X_i,i\in I]$, we define
\begin{equation*}
M_k^{alg}((T_i)_i)=\left(\sum_{m=1}^{n}M^{alg}_{k^{(i)}}(T_{i,m})\wox a_{i,m}\right)_{i\in I}=\left(\sum_{m=1}^{n}\sum_{x\in X_i}\phi_x^{(i)}T_{i,m}\phi^{(i)}_x\wox a_{i,m}\right)_{i\in I}.
\end{equation*}
The sum converges in the strong operator topology. 

Given a sequence of points $x=\{x_i\}_{i\in I}$ where $x_i\in X_i$, denote by $\phi_x=(\phi_{x_i}^{(i)}\wox 1)_{i\in I}$ the collection of the functions $\phi_{x_i}^{(i)}:X_i \to [0,1]$  on the first component and identity on the second component. Then $\phi_x$ can be viewed as a bounded operator defined by multiplication on $\oplus_{i\in I}\ell^2(X_i)\wox\A_{O_{r,i}}$. We say two sequences of points $x=\{x_i\}_{i\in I}$ and $y=\{y_i\}_{i\in I}$ are distinct if $x_i \neq y_i$ for any $i\in I$. Then for $T=(T_i)_{i\in I}\in \IA[X_i,i\in I]$, $$M^{alg}_k(T)=\sum_{x}\phi_xT\phi_x,$$ where $x$ ranges over all distinct sequences of points in $\{X_i\}_{i\in I}$.

Naturally, the map $M^{alg}_k(T)$ can extend to  a unital completely positive map from $A_u^*(X_i,i\in I)$ to itself. We would like to extend $M_k^{alg}$ to $A^*_{u,max}(X_i,i\in I)$, unfortunately, the sum in the above formula makes no sense here. Instead, we can use the following several general results by similar arguments in \cite[Section 2]{JR2013}.

\begin{Lem}\label{finite sum}
For any $S_0>0$, there exists a collection of functions $\phi_j=(\phi^{(i)}_j\wox1)_{i\in I}$, where $\phi_j^{(i)}:X_i\rightarrow [0,1], j=1,\cdots, N$ such that for all $T=(T_i)_{i\in I}\in \IA[X_i,i\in I]$ with $\textup{propagation}(T)\leq S_0$,
$$M_k^{alg}(T)=\sum_{j=1}^N\phi_jT\phi_j,$$
where $T$ is seen as an operator on $\oplus_{i\in I}\ell^2(X_i)\wox\A_{O_{r,i}}$.
\end{Lem}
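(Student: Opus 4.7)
The plan is to turn the (possibly infinite) sum $\sum_x \phi_x T \phi_x$ defining $M_k^{alg}(T)$ into a finite sum by partitioning each $X_i$ uniformly into a fixed number of ``colour classes'' such that the $\phi_x^{(i)}$ for distinct $x$ in the same class have supports sitting too far apart, relative to the propagation bound $S_0$, to be linked by $T$. First I would fix the support bound $S>0$ coming from the equi-property~A data, so that $\supp(\phi_x^{(i)}) \subseteq B(x, S)$ for every $x \in X_i$ and every $i \in I$, and set $R := 2S + S_0 + 1$. The uniformly bounded geometry of $\{X_i\}_{i\in I}$ yields a single integer $N$, independent of $i$, such that every ball of radius $R$ in any $X_i$ contains at most $N$ points.

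A standard greedy colouring argument (well-order each $X_i$, and assign to each point the smallest colour in $\{1, \ldots, N\}$ not already used by a previously coloured point within distance $R$) produces for each $i \in I$ a decomposition
\begin{equation*}
X_i \;=\; \bigsqcup_{j=1}^{N} X_i^j,
\end{equation*}
with the property that any two distinct points of the same $X_i^j$ are at distance $\geq R > 2S$. Define
\begin{equation*}
\phi_j^{(i)} := \sum_{x \in X_i^j} \phi_x^{(i)}, \qquad \phi_j := \bigl(\phi_j^{(i)} \wox 1\bigr)_{i \in I}.
\end{equation*}
Since $R > 2S$, the balls $\{B(x, S)\}_{x \in X_i^j}$ are pairwise disjoint, so at every point of $X_i$ at most one summand contributes; hence $\phi_j^{(i)}$ takes values in $[0,1]$ and acts as a bounded multiplication operator on $\ell^2(X_i)$.

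The key computation is, for each fixed $i \in I$, to expand
\begin{equation*}
\sum_{j=1}^{N} \phi_j^{(i)}\, T_i\, \phi_j^{(i)} \;=\; \sum_{j=1}^{N} \sum_{x,x' \in X_i^j} \phi_x^{(i)}\, T_i\, \phi_{x'}^{(i)},
\end{equation*}
and to check that the off-diagonal terms vanish. For distinct $x, x' \in X_i^j$, the matrix entry $\bigl(\phi_x^{(i)} T_i \phi_{x'}^{(i)}\bigr)(u,v) = \phi_x^{(i)}(u)\, T_i(u,v)\, \phi_{x'}^{(i)}(v)$ can only be nonzero when $d(x,u) < S$, $d(x',v) < S$ and $d(u,v) \leq S_0$, which by the triangle inequality forces $d(x, x') < 2S + S_0 < R$, contradicting $d(x, x') \geq R$. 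So only the diagonal $x = x'$ contributions survive, and they assemble exactly into $\sum_{x \in X_i} \phi_x^{(i)} T_i \phi_x^{(i)}$, i.e.\ the $i$-th component of $M_k^{alg}(T)$.

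The only delicate point is arranging the colouring uniformly across the family $\{X_i\}_{i \in I}$, which is settled immediately by the uniform bounded-geometry assumption giving a single integer $N$ independent of $i$. Reassembling across $i$ then yields the identity $\sum_{j=1}^{N} \phi_j T \phi_j = M_k^{alg}(T)$ for every $T \in \IA[X_i, i\in I]$ of propagation at most $S_0$, as required.
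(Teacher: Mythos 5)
Your proof is correct and takes essentially the same route as the paper: partition each $X_i$ into a uniformly bounded number $N$ of colour classes whose points are pairwise at distance greater than $2S+S_0$ (the paper simply calls this decomposition ``well-known,'' while you spell out the greedy colouring), define $\phi_j^{(i)}$ as the sum of the partition-of-unity functions over the $j$-th class, and observe that the cross terms $\phi_x^{(i)} T_i \phi_{x'}^{(i)}$ with $x\neq x'$ in the same class vanish by the triangle-inequality estimate on propagation and supports.
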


\begin{proof}
Let $S>0$ be the support bound for $\{\xi^{(i)}\}_{i\in I}$. Since the sequence $\{X_i\}_{i\in I}$ has uniformly bounded geometry, it is well-known that there exists an integer $N$ such that for each $i\in I$, $X_i$ can be decomposed into $X_i=\bigsqcup_{n=1}^N X^{(i)}_n$ such that for any $x,y\in X^{(i)}_n$ with $x\neq y$, $d(x,y)>2S+S_0$. For each $j=1,\cdots, N $, define $\phi^{(i)}_j=\sum_{x}\phi^{(i)}_x,$ where $\{\phi^{(i)}_x\}$ is the partition of unity associated to $\xi^{(i)}$, and $x$ ranges over all distinct sequences of points in $\{X_j^{(i)}\}_{i\in I}$. one can check that $\phi_j=\{(\phi^{(i)}_j\wox1)_{i\in I}\}_{j=1}^N$ has the property claimed.
\end{proof}

\begin{Cor}\label{extend}
The associated linear map $M_{k}^{alg}:\IA[X_i,i\in I]\to\IA[X_i,i\in I]$ extends to a unital completely positive(u.c.p) map on any $C^*$-algebraic completion of $\IA[X_i,i\in I]$.
\end{Cor}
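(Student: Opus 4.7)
The plan is to deduce Corollary \ref{extend} from Lemma \ref{finite sum} by realising $M_k^{alg}$, restricted to each propagation slice, as a Stinespring-type u.c.p.\ map whose defining data are bounded multipliers of $\IA[X_i,i\in I]$.

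First I would verify that $\sum_{j=1}^{N}\phi_j^2=1$ as multiplication operators on $\oplus_{i\in I}\ell^2(X_i)\wox\A_{O_{r,i}}$. Within each piece $X_n^{(i)}$ of the decomposition used in the proof of Lemma \ref{finite sum}, the points are more than $2S$ apart, so the balls $B(x,S)$ supporting the functions $\phi_x^{(i)}$ are pairwise disjoint; hence $(\phi_n^{(i)})^2=\sum_{x\in X_n^{(i)}}(\phi_x^{(i)})^2$ pointwise. Summing over $n$ yields $\sum_n(\phi_n^{(i)})^2(z)=\sum_{y\in X_i}\phi_y^{(i)}(z)^2=\sum_{y\in X_i}\xi_z^{(i)}(y)^2=\|\xi_z^{(i)}\|^2=1$ for every $z\in X_i$.

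Next, I would assemble the column operator $V:\oplus_{i\in I}\ell^2(X_i)\wox\A_{O_{r,i}}\to\bigoplus_{j=1}^{N}\bigl(\oplus_{i\in I}\ell^2(X_i)\wox\A_{O_{r,i}}\bigr)$ whose $j$-th component is multiplication by $\phi_j$. The identity above gives $V^{*}V=\sum_j\phi_j^2=1$, so $V$ is an isometry, and Lemma \ref{finite sum} rewrites as
\[
M_k^{alg}(T)\;=\;V^{*}(T\otimes I_N)V
\]
for every $T$ of propagation at most $S_0$. Given any $*$-representation $\pi:\IA[X_i,i\in I]\to\mathcal{B}(H)$, pass to the essential subspace to assume $\pi$ is non-degenerate. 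Since each $\phi_j$ is a bounded two-sided multiplier of $\IA[X_i,i\in I]$ (indeed $\phi_jT$ and $T\phi_j$ lie in $\IA[X_i,i\in I]$ with the same propagation bound), the standard multiplier extension furnishes bounded self-adjoint operators $\pi(\phi_j)\in\mathcal{B}(H)$ with $0\leq\pi(\phi_j)\leq 1$ and $\sum_j\pi(\phi_j)^2=1$. Letting $V_\pi$ be the column operator with $j$-th component $\pi(\phi_j)$, which is then an isometry on $H$, one obtains
\[
\pi(M_k^{alg}(T))\;=\;V_\pi^{*}(\pi(T)\otimes I_N)V_\pi,
\]
and the right-hand side is the standard Stinespring form of a u.c.p.\ map on $\overline{\pi(\IA[X_i,i\in I])}$.

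Finally, the extensions defined for different values of $S_0$ are automatically consistent, since any $T$ of propagation at most $\min(S_0,S_0')$ yields the same algebraic element $M_k^{alg}(T)\in\IA[X_i,i\in I]$; hence $M_k^{alg}$ extends unambiguously to a u.c.p.\ map on any $C^*$-completion of $\IA[X_i,i\in I]$. The main technical point will be verifying rigorously that each $\phi_j$ lifts to a bounded operator $\pi(\phi_j)$ of norm at most one in a general representation; this is handled via the multiplier extension on the non-degenerate part of $\pi$, together with the positivity relation $\sum_j\phi_j^2=1$ and $\phi_j\geq 0$, which forces the required norm bound. Compared with the analogous statement of \v{S}pakula--Willett in the single-space setting, the only additional input here is the uniform control coming from equi-property A of the family $\{X_i\}_{i\in I}$.
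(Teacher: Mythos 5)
Your proposal is correct and follows the same route as the paper's proof: both deduce the result from Lemma \ref{finite sum} by expressing $M_k^{alg}$ in the form $T\mapsto\sum_{j}\phi_j T\phi_j$ and using the identity $\sum_j\phi_j^2=1$ to get a contractive, completely positive extension. You simply spell out what the paper leaves implicit — the verification that $\sum_j\phi_j^2=1$, the passage to the essential subspace, and the multiplier/Stinespring-dilation argument showing that the bound $\|\pi(M_k^{alg}(T))\|\le\|\pi(T)\|$ holds in an arbitrary $*$-representation — all of which are accurate and are the intended justification.
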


\begin{proof}
Given $T\in M_n(\IA[X_i,i\in I])$. Let $S_0=\mbox{propagation}(T)$. According to Lemma \ref{finite sum}, there exist $\phi_j$, $j=1,\cdots, N$ such that
	$$M_k^{n}(T)=\sum_{j=1}^N \mbox{diag}(\phi_j)T \mbox{diag}(\phi_j)$$
where $M_k^{n}$ is the matrix version of $M_k^{alg}$ and $\mbox{diag}(\phi_j)$ is the diagonal matrix with all non-zero entries $\phi_j$. Thus $M_k^{alg}$ is a bounded u.c.p map. Then it can extend to the u.c.p map from any $C^*$-algebraic completion of $\IA[X_i,i\in I]$.
\end{proof}

We now consider the diagram
$$\xymatrix{\IA[X_i,i\in I]\ar[r]^{M^{alg}_k}\ar[d]&\IA[X_i,i\in I]\ar[d]\\
A^*_{u,max}(X_i,i\in I)\ar[r]^{M^{max}_k}\ar[d]^{\lambda}&A^*_{u,max}(X_i,i\in I)\ar[d]\\
A_u^*(X_i,i\in I)\ar[r]^{M_k}&A_u^*(X_i,i\in I)}$$
where the maps $M^{max}_k$ and $M_k$ are the u.c.p. maps given by the Corollary \ref{extend}.

\begin{Lem}\label{closed}
For any $S>0$, $\mathbb{A}_u^{S}[X_i,i\in I]$ which denoted the set of all $T\in\IA[X_i,i\in I]$ with $\textup{propagation}(T)\leq S$, is closed in $A^*_{u,max}(X_i,i\in I)$.

In particular, the image of Schur multiplier $M_k^{alg}$ associated to $\xi^{(i)}$ with support bound $S$ is contained in $\mathbb{A}_u^{2S}[X_i,i\in I]$.
\end{Lem}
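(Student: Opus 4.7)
The statement has two parts. The ``in particular'' clause is immediate once one unpacks the definition of $M_k^{alg}$: for $T=(T_i)_{i\in I}\in\IA[X_i,i\in I]$ one has
$$M_k^{alg}(T)_i(x,y)\;=\;\langle\xi_x^{(i)},\xi_y^{(i)}\rangle\, T_i(x,y),$$
and when $d_{X_i}(x,y)>2S$ the supports $B(x,S)$ and $B(y,S)$ of $\xi_x^{(i)}$ and $\xi_y^{(i)}$ are disjoint, so the scalar kernel vanishes. This gives $\mathrm{propagation}(M_k^{alg}(T))\leq 2S$, and the remaining defining properties of $\IA$ (uniform entry bound, lift through $\beta_N(x_i)$, derivative bound) are inherited intact because multiplication by the scalar $k^{(i)}(x,y)\in[-1,1]$ does not disturb the $\A(V)$-structure of the entries.

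The substantive content is the closedness assertion. My plan is to establish that, on the propagation-$S$ subspace $\IA^S[X_i,i\in I]$, the max norm is equivalent to the entry-wise supremum
$$\|T\|_\infty\;:=\;\sup_{i\in I,\;x,y\in X_i}\|T_i(x,y)\|_{\A(V)}.$$
One direction, $\|T\|_\infty\leq\|T\|_{\max}$, follows because the max norm dominates the reduced norm of $\IA[X_i,i\in I]$ acting on $\bigoplus_{i\in I}\ell^2(X_i)\wox\A_{O_{r,i}}$, and in that representation each $T_i(x,y)$ is recovered as a matrix coefficient by compressing with suitable rank-one projections on the $\ell^2(X_i)$-factor. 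The reverse inequality $\|T\|_{\max}\leq C_S\|T\|_\infty$ is an analogue of Lemma \ref{maxnorm}: the equi-bounded-geometry of $\{X_i\}_{i\in I}$ allows the propagation-$S$ band of each $X_i\times X_i$ to be partitioned into at most $n_S$ graphs of partial bijections (with $n_S$ depending only on $S$, not on $i$), decomposing $T$ accordingly into a finite sum whose image under any $*$-representation of $\IA^S[X_i,i\in I]$ has norm bounded by $\|T\|_\infty$.

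With the norm equivalence in hand, a max-Cauchy sequence $\{T^{(n)}\}\subset\IA^S[X_i,i\in I]$ is Cauchy in $\|\cdot\|_\infty$, so the entries $T^{(n)}_i(x,y)$ converge uniformly in $(i,x,y)$ to elements $T_i(x,y)\in\A(V)$, defining a candidate limit $T$. The propagation bound and uniform boundedness of entries transfer to $T$ directly. The more delicate clauses (3)--(4) in the definition of $\IA$ require showing that the lifts $T^{(n)}_{1,i}(x,y)\in\IC_u[X_i]\wod\A(W_{N_n}(x_i))$ converge to a lift $T_{1,i}$ of $T$ satisfying a uniform derivative bound. For this I use the uniform support radius in $\IR_+\times H$ and the uniform derivative bound transmitted through the max-norm control on $\{T^{(n)}\}$: these force the sequence $\{N_n\}$ to remain in a bounded range, permitting passage to a common $N$ and extraction of a limit lift whose directional derivatives inherit the bound by uniform convergence.

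The principal obstacle is precisely this stabilization of the ``Clifford dimension'' $N$; the rest of the argument is a routine entry-wise limit in a max completion. Once $N$ is fixed, convergence in $\A(V)$ descends to convergence in $\A(W_N(x_i))$ because $\beta_N(x_i)$ is a $*$-homomorphism of $C^*$-algebras with closed image, and the derivative and support conditions survive since they are closed conditions under uniform convergence of the lifts.
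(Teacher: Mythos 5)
Your closedness argument follows essentially the same route as the paper's: on the propagation-$\le S$ band of $\IA[X_i,i\in I]$ the entrywise supremum norm and the maximal norm are equivalent, so a max-Cauchy sequence has uniformly converging entries and the limit is the resulting band operator. The paper packages the lower bound $\|T\|_\infty \le \|T\|_{\max}$ by composing with $\lambda$ and reading off matrix coefficients in $A^*_u(X_i,i\in I)$, while you state it directly; the upper bound $\|T\|_{\max}\le C_S\|T\|_\infty$ is in both proofs the bounded-geometry estimate of Lemma~\ref{maxnorm} applied through a decomposition of the $S$-band into finitely many partial translations. These are the same calculation in different clothing.

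Where your write-up goes beyond the paper is in explicitly raising that conditions (3)--(4) of the definition of $\IA[X_i,i\in I]$ (the lift of each entry through $\beta_N(x_i)$ into a \emph{single} finite-dimensional $\A(W_N(x_i))$, and the uniform directional-derivative bound $c$) must also survive the limit. You are right that this is not automatic: $\A(V)$ is an inductive limit and a uniformly converging sequence with $n$th term lifting through $\A(W_{N_n}(x_i))$ need not have a limit lying in any finite stage. But the fix you offer does not hold up. The support-radius constraint (built into the $\A_{O_{r,i}}$ factor) controls only where in $\mathbb{R}_+\times H$ the entries are supported, and the ball $B(f(p(x_i)),r)$ is \emph{not} contained in $\mathbb{R}_+\times W_N(p(x_i))$ for any finite $N$, so it imposes no bound on the Clifford dimension; likewise the derivative bound $c$ does not control $N$. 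Thus the sentence asserting that the support radius and derivative bound ``force $\{N_n\}$ to remain in a bounded range'' is an unsupported step, and it carries the weight of the whole closedness claim for conditions (3)--(4). You need either a genuinely different argument for why the limit admits a finite-$N$ lift with bounded derivatives, or to restrict attention to the situation in which the lemma is actually invoked (limits of Schur-compressed elements), where additional structure is available.
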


\begin{proof}
Let $\{T_n\}_n$ be a sequence in $\mathbb{A}_u^{S}[X_i,i\in I]$ converging in $A^*_{u,max}(X_i,i\in I)$, then $\lambda(T_n) \rightarrow \lambda(T)$ in $A_u^*(X_i,i\in I)$, moreover the matrix entries of $\{T_n\}_n$ converge uniformly to those of $\lambda(T)$. According to Lemma \ref{maxnorm}, it follows that $\{T_n\}_n$ converges to $\lambda(T)$ in the $A^*_{u,max}(X_i,i\in I)$.

The remaining comment follows as $M^{max}_k(\IA[X_i,i\in I])\subseteq \mathbb{A}_u^{2S}[X_i,i\in I]$ under the stated assumptions.
\end{proof}

\begin{Pro}\label{equi-iso}
With assumption as above, the canonical quotient map
$$\lambda:A^*_{u,max}(X_i,i\in I)\to A_u^*(X_i,i\in I)$$
is an isomorphism.
\end{Pro}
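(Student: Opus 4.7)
Following \v{S}pakula and Willett \cite[Theorem 5.3]{JR2013}, the plan is to show that $\lambda$ is isometric on the dense $*$-subalgebra $\IA[X_i,i\in I]$, which will yield an isomorphism at the level of $C^*$-completions. The principal tool is the family of Schur multipliers $M_k^{alg}$ arising from the equi-property A maps $\xi^{(i)}:X_i\to\ell^2(X_i)$: by Corollary \ref{extend} they extend to u.c.p.\ maps $M_k^{\max}$ and $M_k$ on the maximal and reduced completions, intertwined by $\lambda$, and by Lemma \ref{finite sum} admit the finite-sum presentation $M_k^{\max}(T)=\sum_{j=1}^N\phi_j T\phi_j$ whenever $T$ has propagation bounded by the chosen control parameter.

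The plan combines two norm estimates. First, for $T\in\IA[X_i,i\in I]$ of propagation $\le S_0$ and $\xi^{(i)}$ chosen with $(S_0,\eta)$-control, a direct matrix-entry computation gives $(M_k^{\max}(T)-T)(x,y)=(k(x,y)-1)T(x,y)$ with $|k(x,y)-1|\le\eta$ on $\supp(T)$. Combined with the elementary bound $\sup_{x,y}\|T(x,y)\|_{\mathcal{A}(V)}\le\|T\|_{\max}$ (obtained by compressing $T$ against diagonal matrix units built from an approximate unit of $\mathcal{A}(V)$) and the analogue of Lemma \ref{maxnorm} for $\IA[X_i,i\in I]$ applied to the difference (which has propagation $\le S_0$), this yields
$$\|M_k^{\max}(T)-T\|_{\max}\le C_{S_0}\,\eta\,\|T\|_{\max},$$
with $C_{S_0}$ depending only on $S_0$ and the uniform bounded geometry of $\{X_i\}$ and, crucially, \emph{not} on $\eta$ or on the support bound $S$ of $\xi^{(i)}$. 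Second, because distinct points of $X_j^{(i)}$ lie at distance $>2S+S_0$, each summand $\phi_j T\phi_j$ is supported in the disjoint union $\bigsqcup_{x\in X_j^{(i)}}B(x,S)\times B(x,S)$ and thus identifies with an element of the $\ell^\infty$-direct product
$$\mathcal{B}_j:=\prod_{i\in I}\bigoplus_{x\in X_j^{(i)}}M_{|B(x,S)\cap X_i|}\bigl(\mathcal{A}_{O_{r,i}}\bigr),$$
which, being an $\ell^\infty$-product of uniformly bounded matrix amplifications of a fixed $C^*$-algebra, has a unique $C^*$-norm equal to its spatial norm. Combining this block-diagonal analysis with the Kraus identity $M_k^{alg}(T)=V^*(T\otimes 1)V$ for the isometry $V\zeta=\sum_x\phi_x\zeta\otimes e_x$ (realised in an auxiliary envelope where the dilation $T\otimes 1$ attains norm $\|\lambda(T)\|_{\text{red}}$, using that the Kraus operators $\phi_x$ are diagonal multiplication operators with coinciding max and reduced norms) yields
$$\|M_k^{\max}(T)\|_{\max}=\|M_k(\lambda(T))\|_{\text{red}}\le\|\lambda(T)\|_{\text{red}}.$$

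Assembling the two estimates gives $(1-C_{S_0}\eta)\|T\|_{\max}\le\|\lambda(T)\|_{\text{red}}$, and letting $\eta\to 0$ with $S_0$ fixed produces the required isometry $\|T\|_{\max}=\|\lambda(T)\|_{\text{red}}$ on $\IA[X_i,i\in I]$, hence the isomorphism of the two completions. The main obstacle is the second norm estimate $\|M_k^{\max}(T)\|_{\max}=\|M_k(\lambda(T))\|_{\text{red}}$: the naive triangle-inequality bound $\|\sum_j\phi_j T\phi_j\|_{\max}\le\sum_j\|\phi_j T\phi_j\|_{\max}$, even after using the unique $C^*$-norm of each $\mathcal{B}_j$ to dispense with each summand individually, incurs a factor of $N$ that grows with the support bound $S$, and hence as $\eta\to 0$, destroying the estimate. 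Avoiding this loss is precisely the decisive step of \v{S}pakula and Willett in the property A case: the Kraus dilation must be implemented as a single compression of $T\otimes 1$ inside an enveloping $C^*$-algebra where the dilation achieves reduced norm, and the diagonal (hence norm-unambiguous) character of the Kraus operators $\phi_x$ is essential for this to propagate the equality to the maximal completion.
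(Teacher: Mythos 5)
The paper's proof proceeds along a genuinely different and more economical route: it establishes \emph{injectivity} of $\lambda$ qualitatively, rather than attempting to prove that $\lambda$ is isometric on the dense subalgebra. Concretely, from equi-property A one obtains a sequence of kernels $(k_n^{(i)})$ whose associated Schur multipliers $M_{k_n}^{\max}$ and $M_{k_n}^{\lambda}$ converge point-norm to the identity on the maximal and reduced completions (this is exactly your first estimate, and it is correct). If $T\in\ker\lambda$, then $\lambda(M_{k_n}^{\max}(T))=M_{k_n}^{\lambda}(\lambda(T))=0$, and crucially $M_{k_n}^{\max}(T)$ lies in $\mathbb{A}_u^{2S_n}[X_i,i\in I]$, a space of finite-propagation elements on which $\lambda$ is \emph{injective} (not isometric) simply because its matrix entries can be read off from the action of $\lambda(T)$ on the Hilbert module. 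Hence $M_{k_n}^{\max}(T)=0$ for all $n$, and letting $n\to\infty$ gives $T=0$. An injective surjective $*$-homomorphism of $C^*$-algebras is automatically an isomorphism, so the proof is complete; at no point does the paper need to compare $\|\cdot\|_{\max}$ and $\|\cdot\|_{\mathrm{red}}$ quantitatively on the image of a Schur multiplier.

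Your second estimate, $\|M_k^{\max}(T)\|_{\max}=\|M_k(\lambda(T))\|_{\mathrm{red}}$, is exactly the place where your argument has a real gap, and you correctly flag it as the decisive step. The Kraus dilation $M_k^{alg}(T)=V^*(T\hat\otimes 1)V$ is certainly valid and shows $M_k^{\max}$ is contractive, but to deduce $\|M_k^{\max}(T)\|_{\max}\le\|\lambda(T)\|_{\mathrm{red}}$ you would need, inside an arbitrary $*$-representation $\pi$ of $\IA[X_i,i\in I]$, the bound $\|\pi(T)\|\le\|\lambda(T)\|_{\mathrm{red}}$ for the operator being compressed --- which is precisely what one is trying to prove and cannot be assumed. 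The observation that each summand $\phi_j T\phi_j$ lives in a block-diagonal algebra with a unique $C^*$-norm is true, but the blocks for different $j$ interleave, so this does not furnish a single $C^*$-subalgebra containing $M_k^{alg}(T)$ on which the norm is canonical; and as you note the crude triangle inequality loses a factor of $N$ that grows with the support parameter. The diagonality of the $\phi_x$'s does not by itself force a generic representation of $\IA[X_i,i\in I]$ to see $T$ with its reduced norm. Unless you can close this gap, the quantitative route does not go through; the paper's injectivity route sidesteps it entirely, since the only property of $\lambda$ on finite-propagation elements that is needed is that vanishing of $\lambda(T)$ forces vanishing of all matrix entries --- an elementary fact requiring no norm comparison beyond Lemma \ref{maxnorm}.
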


\begin{proof}
As the sequnence $\{X_i\}_{i\in I}$ has equi-property A and Lemma \ref{maxnorm}, there exists a sequence of kernels $(k_n^{(i)})$ such that the associated Schur multiplier $M_{k_n}^{\lambda}$ and $M_{k_n}^{max}$ converge point-norm to the identity on $A_u^*(X_i,i\in I)$ and $A^*_{u,max}(X_i,i\in I)$, respectively. If $T\in A^*_{u,max}(X_i,i\in I)$ is in the kernel of $\lambda$, then we have that $\lambda(M_{k_n}^{max}(T))=M_{k_n}^{\lambda}(\lambda(T))=0$ for all $n$.

However, Lemma \ref{closed} implies that $M_{k_n}^{max}(T)\in \IA[X_i,i\in I]$ for all $n$. As $\lambda$ is injective here, it follows that $M_{k_n}^{max}(T)=0$ for all $n$, therefore $T=0$. Hence $\lambda$ is injective as desired.
\end{proof}

\begin{proof}[Proof of Proposition \ref{local iso}]
As $O_r$ is the disjoint union of a family of open subsets $\{O_{r,i}\}_{i\in I}$ in $\mathbb{R}_+\times H$, for any $T \in \mathbb{C}_u[X,\mathcal{A}]_{O_r}$, Lemma \ref{idealdecom} implies there is a unique decomposition $T=\sum_i T_i$, where $\mbox{supp}(T_i)\subseteq X\times X \times\mathnormal{O}_{r,i}$. Then there is a $*$-homomorphism
\begin{equation*}
\begin{aligned}
\Phi: \IC_u[X,\A]_{O_r}&\to\prod_{i\in I}\IC_u[X,\A]_{O_{r,i}} \\
T&\mapsto (T_i).
\end{aligned}
\end{equation*}

Since $f: Y\to H$ is a coarse embedding, by the condition (4) of Definition \ref{algebraic twisted uniform Roe algebra}, it follows that for each $T \in \mathbb{C}_u[X,\mathcal{A}]_{O_r}$,  there exists $m\in\mathbb{N}$ depending on $T$ such that $$T\mapsto(T_i)_{i\in I}\in\IA\left[p^{-1}\left(B(p(x_i),m)\right),i\in I\right].$$ Then we actually have that
$$\IC_u[X,\A]_{O_r}=\bigcup_{m=1}^{\infty}\IA\left[p^{-1}\left(B(p(x_i),m)\right),i\in I\right],$$
which gives rise to an identification in the reduced case
$$C^*_u(X,\A)_{O_r}=\lim_{m\to\infty}A_u^*\left(p^{-1}\left(B(p(x_i),m)\right),i\in I\right),$$
and 
$$C^*_{u,\phi}(X,\A)_{O_r}=\lim_{m\to\infty}A_{\phi}^*\left(p^{-1}\left(B(p(x_i),m)\right),i\in I\right),$$
where $A_{\phi}^*\left(p^{-1}\left(B(p(x_i),m)\right),i\in I\right)$ is the completion of $\IA\left[p^{-1}\left(B(p(x_i),m)\right),i\in I\right]$ under the norm in $C^*_{u,\phi}(X,\A)_{O_{r}}$.

By the condition (2) of the map $p$, the collection $p^{-1}\left(B(p(x_i),m)\right)$ has equi-property A for each $m>0$. According to Proposition \ref{equi-iso}, it follows that
$$\lambda:A^*_{u,max}(p^{-1}\left(B(p(x_i),m)\right),i\in I)\to A_u^*(p^{-1}\left(B(p(x_i),m)\right),i\in I)$$
is an isomorphism.

Note that the norm on $A^*_{\phi}(p^{-1}\left(B(p(x_i),m)\right),i\in I)$ stands between the maximal norm and the reduced norm, so for each fixed $m$, there is an $*$-isomorphism
$$A^*_{\phi}(p^{-1}\left(B(p(x_i),m)\right),i\in I)\cong A_u^*(p^{-1}\left(B(p(x_i),m)\right),i\in I).$$
Consequently, the canonical quotient map
$$\lambda: C_{u,\phi}^*(X,\mathcal{A})_{\mathnormal{O_r}} \rightarrow C_u^*(X,\mathcal{A})_{\mathnormal{O_r}} $$
is an isomorphism.
\end{proof}

\section{The proof of the main theorem}

In \cite{Yu2000}, G. Yu proved a geometric analogue of the infinite-dimensional Bott periodicity and used it to
reduce the coarse Baum--Connes conjecture to the twisted coarse Baum--Connes conjecture. In this section, we will use the geometric analogue of the infinite-dimensional Bott periodicity to prove Theorem \ref{main result}.

Let $H$ be a separable and infinite-dimensional Hilbert space. Let $V$ be the countably infinite-dimensional Euclidean subspace of $H$ defined in Section \ref{Section 3}. If $V_a\subset V$ is a finite-dimensional affine subspace, then we use $\mathcal{H}_a=L^2(V_a,\Cl(V_a^0))$ to denote the Hilbert space of square integrable functions from $V_a$ to $\text{Cliff}(V_a^0)$.
If $V_a\subset V_b$, then there exists an isomorphism $$\mathcal{H}_b\cong \mathcal{H}_{ba}\wox\mathcal{H}_a,$$
where $\mathcal{H}_{ba}$ is the the Hilbert space associated with $V_{ba}^0$.
Define a unit vector $\xi_{ba}\in \mathcal{H}_{ba}$ by
$$\xi_{ba}(v_{ba})=\pi^{-\text{dim}(V_{ba}^0)/{4}}\exp\left(-\frac{1}{2}\|v_{ba}\|^2\right)$$
for all $v_{ba}\in V_{ba}^0$. Regarding $\mathcal{H}_{a}$ as a subspace of $\mathcal{H}_{b}$ via the isometry $\xi \mapsto \xi_0\hat{\otimes} \xi$, we define
$$\H=\lim\limits_{\longrightarrow}\mathcal{H}_{a}.$$
where the Hilbert space direct limit is taken over the direct system of finite-dimensional affine subspaces of $V$.

Let $s=\lim\limits_{\longrightarrow}s_a$ be the algebraic direct limit of the Schwartz subspaces $s_a\subset \H_{a}$. If $V_a\subset V$ is a finite-dimensional affine subspace, then the Dirac operator $D_a$ is an unbounded operator on $\H_a$ with domain $s_a$, defined by 
$$D_a \xi=\sum \limits_{i=1}^n (-1)^{\text{deg}\xi} \frac{\partial\xi}{\partial x_i} v_i$$
where $\{v_1,\cdots,v_n\}$ is an orthonormal basis for $V_a^0$ and $\{x_1,\cdots,x_n\}$ are the dual coordinates to $\{v_1,\cdots, v_n\}$.
If $V_a\subset V$ is a linear subspace, we define the Clifford operator $C_a$ on $s$ by
$$(C_a\xi)(v_b)=v_a\cdot\xi(v_b)$$
for any $\xi \in s_b$ and $v_b \in V_b$, where $v_a$ is the vector projection of $v_b$ onto $V_a$, and $v_a\cdot \xi(v_b)$ is the Clifford multiplication of $v_a$ and $\xi(v_b)$. Note that $C_a$ is a well-defined operator on $s$.

For any $x\in X$, let $V_n(p(x))=W_{n+1}(p(x))\ominus W_n(p(x))$ if $n\geq 1$, $V_0(p(x))=W_1(p(x))$, where $x\in X$ and $W_n(p(x))$ is as in Section \ref{Section 3}. We have the algebraic decomposition:
$$V=V_0(p(x))\oplus V_1(p(x))\oplus \cdots \oplus V_n(p(x))\oplus \cdots$$
For each $n$, we define an unbounded operator $B_{n,t}(x)$ on $\mathcal{H}$ as follows:
$$B_{n,t}(x)=t_0D_0+t_1D_1+\cdots+t_{n-1}D_{n-1}+t_n(D_n+C_n)+t_{n+1}(D_{n+1}+C_{n+1})+\cdots$$
where $t_j=1+t^{-1}j$, $D_n$ and $C_n$ are respectively the Dirac operator and Clifford operator associated to $V_n(p(x))$. By Lemma 5.8 in \cite{HKT1998}, $B_{n,t}(p(x))$ is essentially self-adjoint with compact resolvent.

Now, we are ready to construct an asymptotic morphism $(\alpha_t)_{t\in [1,\infty)}$ from $C^*_{u}(X, \mathcal{A})$ to $\mathcal{S}\wox UC^*(X)$. For every non-negative integer $n$ and $x\in X$, we defined
$$\theta_t^n(p(x)): \mathcal{A}(W_n(p(x)))\rightarrow \mathcal{S}\wox K(\mathcal{H})$$ by
$$\theta_t^n(p(x))(g\wox h)=g_t(X\wox 1+1\wox B_{n,t}(x))(1\wox M_{h_t}),$$
for each $g\in \mathcal{S}$, $h\in \mathcal{C}(W_n(p(x)))$, where $g_t(s)=g(t^{-1}s)$ for all $t\geq 1$ and $s\in \mathbb{R}$, $h_t(v)=h(t^{-1}v)$ for all $t\geq 1$ and $v\in W_n(p(x))$, $M_{h_t}$ acts on $\mathcal{H}$ by pointwise multiplication. Lemma 5.8 in \cite{HKT1998} implies that $\theta_t^n(x)(g\hat{\otimes}h) \in \mathcal{S}\hat{\otimes}K(\mathcal{H})$.

\begin{Rem}\label {asymptotic momorphism}
By the proof of Proposition 4.2 in \cite{HKT1998}, we have the following asymptotically commutative diagram:
$$\xymatrix{\mathcal{A}(W_n(p(x))) \ar[d]_{\beta_{n+1,n}(p(x))}\ar[drr]^{\theta_t^n(x)}\\
\mathcal{A}(W_{n+1}(p(x)))  \ar[rr]^{\theta_t^{n+1}(x)} & & \mathcal{S} \hat{\otimes}K(H) }$$
where $\beta_{n+1,n}(p(x))$ is the $*$-homomorphism associated to the inclusion $W_n(p(x)\to W_{n+1}(p(x)))$.
\end{Rem}

\begin{Def}
For each $t\in[1,\infty)$, we define a map
$$\alpha_t:\mathbb{C}_u[X, \mathcal{A}] \rightarrow \mathcal{S}\wod U\mathbb{C}[X]$$
by:
$$(\alpha_t(T))(x,y)=(\theta_t^N(p(x)))(T_1(x,y))$$
for every  $T \in \mathbb{C}_u[X, \mathcal{A}]$, where $N$ is a non-negative integer such that for every pair $(x,y) \in X\times X$, there exists $T_1(x,y)\in \mathcal{A}(W_N(p(x))) $ satisfying $(\beta_N(x))(T_1(x,y))=T(x,y).$

As for the algebraic twisted Roe algebra, we can define a map 
$$\alpha_t:\mathbb{C}[X, \mathcal{A}\hat{\otimes}\mathcal{K}] \rightarrow \mathcal{S}\wod \mathbb{C}[X, \mathcal{K}]$$
by:
$$(\alpha_t(S))(x,y)=(\theta_t^N(p(x)))(S_1(x,y))$$
for every  $S \in \mathbb{C}[X, \mathcal{A}\hat{\otimes}\mathcal{K}]$, $t\in[1,\infty)$, where $N$ is a non-negative integer such that for every pair $(x,y) \in X\times X$, there exists $S_1(x,y)\in \mathcal{A}(W_N(p(x)))\hat{\otimes}\mathcal{K} $ satisfying $(\beta_N(x))(S_1(x,y))=S(x,y).$
\end{Def}

With the same argument in \cite[Lemma 4.4]{JR2013}, we see that the ranges of the maps $(\alpha_t)_{t\in [1,\infty)}$ are contained in $U\mathbb{C}[X]$, thus the maps is well-defined.  By Remark \ref{asymptotic momorphism}, if $N'>N$,  we know that
$$\|(\theta_t^{N'}(x)) (\beta_{N',N}(p(x))(T_1(x,y)))-\theta_t^N(x)(T_1(x,y))\|$$
uniformly tends to $0$ for all $x\in X$ as $t \rightarrow \infty$. Thus we have that $\alpha_t(T)$ does not asymptotically depend on the choice of $N$.

By Lemma 7.3, Lemma 7.4 and Lemma 7.5 in \cite{Yu2000}, we have the following result.

\begin{Lem}\label{Dirac}
{\it 
The maps $(\alpha_t)_{t\in [1,\infty)}$ extend to  asymptotic morphisms
$$(\alpha_t)_{t\in [1,\infty)}: C_u^*(X, \mathcal{A}) \rightarrow \mathcal{S}\wox UC^*(X)$$
and its maximal counterpart
$$(\alpha_t)_{t\in [1,\infty)}: C_{u,max}^*(X, \mathcal{A}) \rightarrow \mathcal{S}\wox UC_{max}^*(X).$$
}
\end{Lem}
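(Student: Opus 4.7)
The plan is to verify the defining properties of an asymptotic morphism on the dense $*$-subalgebra $\mathbb{C}_u[X,\A]$ first, and then extend continuously to each completion by appropriate norm estimates. The four properties to check are continuity of $t \mapsto \alpha_t(T)$, asymptotic additivity, asymptotic multiplicativity, and asymptotic $*$-preservation. Additivity, $*$-preservation, and continuity follow routinely from the pointwise definition together with standard continuity of the functional calculus $g \mapsto g_t(X\wox 1 + 1 \wox B_{n,t}(p(x)))$, so the real content is asymptotic multiplicativity and the norm bounds needed for the extension.

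For asymptotic multiplicativity, the key device is Remark \ref{asymptotic momorphism}: given $T,S \in \mathbb{C}_u[X,\A]$ with common propagation bound $r$, Remark 3.2 gives a uniform $N'\gg N$ such that for every pair of points $x,z$ with $d_X(x,z)\le r$ the subspaces $W_N(p(x))$ and $W_N(p(z))$ both embed into $W_{N'}(p(x))$. Using this, I would rewrite $T(x,z)$ and $S(z,y)$ as images of $\beta_{N'}(p(x))$ from a common finite-dimensional algebra, and then apply the fibrewise estimate
\[
\bigl\|\theta_t^{N'}(p(x))\circ\beta_{N',N}(p(x))(a) - \theta_t^{N}(p(x))(a)\bigr\| \xrightarrow{t\to\infty} 0
\]
together with the asymptotic multiplicativity of the individual $\theta_t^{N'}(p(x))$ from \cite{HKT1998}. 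Bounded geometry of $X$ keeps the matrix product $\sum_z T(x,z)S(z,y)$ uniformly finite, and Lemma \ref{maxnorm} converts the uniform entrywise estimate into a norm estimate on $\alpha_t(TS)-\alpha_t(T)\alpha_t(S)$ in both the reduced and maximal norms, as in Lemmas 7.3--7.5 of \cite{Yu2000}.

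For the extension step, the maximal case is essentially formal: any $*$-representation $\rho$ of $\mathcal{S}\wox UC^*_{\max}(X)$ composed with $\alpha_t$ yields, on elements of a fixed propagation class and eventually in $t$, an honest $*$-homomorphism of $\mathbb{C}_u[X,\A]$ dominated by a constant times the maximal norm; universality of $C^*_{u,\max}(X,\A)$ then delivers a continuous extension. For the reduced case, one must compare the module representation of $\mathbb{C}_u[X,\A]$ on $E$ with the Hilbert-space representation of $\mathcal{S}\wox U\mathbb{C}[X]$ on $L^2(\mathbb{R})\wox\ell^2(X)\wox\H$, transporting the bound through the Bott--Dirac construction; this is precisely the content of Lemma 7.3 in \cite{Yu2000} and carries over verbatim, with the $p(x)$-parametrised Bott--Dirac operator $B_{n,t}(p(x))$ replacing Yu's original $B_{n,t}(x)$. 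The $\mathcal{S}\wox UC_{\max}^*(X)$-valued version of $\alpha_t$ on the twisted Roe algebra $C^*_{max}(X,\A\wox\K)$ is handled identically, with the extra $\K$-tensor factor causing no new difficulty.

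The main obstacle I anticipate is making the reduced-norm estimate uniform in $t$: one has to control how the unbounded, compact-resolvent operators $B_{n,t}(p(x))$ interact with the Hilbert $\A(V)$-module structure on $E$ so that the norm of $\alpha_t(T)$ does not blow up as $t \to \infty$. Once that estimate is extracted from \cite{Yu2000,HKT1998}, together with the asymptotically commutative diagram of Remark \ref{asymptotic momorphism} and the equi-property A apparatus already installed in Section \ref{Section 3}, the remaining bookkeeping --- extending continuously from $\mathbb{C}_u[X,\A]$ to both completions --- is standard.
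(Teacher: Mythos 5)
Your treatment of the maximal case follows the paper closely: establish uniform entrywise estimates via Remark \ref{asymptotic momorphism} (your ``asymptotically commutative diagram'') combined with Lemmas 7.3--7.5 of \cite{Yu2000}, then convert to a max-norm estimate using an analogue of Lemma \ref{maxnorm}, then extend by universality. Where you diverge from the paper is the reduced case, and this is the genuinely interesting difference. You propose to estimate the reduced norm directly by transporting Yu's Lemma~7.3 bound to the Hilbert $\mathcal{A}(V)$-module representation on $E$, flagging the uniformity in $t$ as the main anticipated obstacle. The paper avoids this entirely: having produced the maximal asymptotic morphism $\alpha_t\colon C^*_{u,\max}(X,\mathcal{A}) \to \mathcal{S}\wox UC^*_{\max}(X)$, it simply composes with the canonical quotient $\mathcal{S}\wox UC^*_{\max}(X)\to\mathcal{S}\wox UC^*(X)$ and then invokes Theorem \ref{max-red} --- the $C^*$-level isomorphism $C^*_{u,\max}(X,\mathcal{A})\cong C^*_u(X,\mathcal{A})$, which is the principal output of Section \ref{Section 3} --- to replace the domain. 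This makes the reduced asymptotic morphism a formal consequence of the maximal one, with no need to estimate against the module representation at all. Your route should also succeed (Yu's reduced-norm estimate is robust, and you correctly note the only change is replacing $B_{n,t}(x)$ by the $p(x)$-parametrised Bott--Dirac operator), but it duplicates work and misses the point that the A-by-CE hypothesis has already been spent, via Theorem \ref{max-red}, to make the two twisted completions coincide --- so the reduced obstacle you anticipate is exactly what the paper is engineered to dissolve rather than to overcome head-on.
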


\begin{proof}
Given any $T,S\in\mathbb{C}_u[X,\mathcal{A}]$. By the condition (1) of Definition \ref{A-by-CE coarse fibration} and the definition of $W_n(p(x))$, there exists $N>0$ such that $W_n(p(x))\subset W_{n+1}(p(y))$ for all $n\geq N$ and $x,y\in X$ satisfying $d_X(x,y)\leq \text{propagation}(T)$.  By the condition $(1)$ of Definition \ref{algebraic twisted uniform Roe algebra} and Remark \ref{composition}, there exists $N_0$ such that for each $n\geq N_0$, and for every pair $(x,y)\in X\times Y$ there exists $T_1(x,y)\in \mathcal{A}(W_{n}(p(x))) $ satisfying $(\beta_{n}(x))(T_1(x,y))=T(x,y)$ and $(\beta_{n}(x))(S_1(x,y))=S(x,y)$. It follows from Lemma 7.3, Lemma 7.4 and Lemma 7.5 in \cite{Yu2000} that
$$\big\|(\theta^n_t(p(x))(T_1S_1(x,y))-\theta^n_t(p(x))(T_1(x,y))\theta^n_t(p(x))(S_1(x,y))\big\|_{\mathcal{S}\wox\K(\H)}\to0$$
and
$$\|(\alpha_t(T^*)-\alpha_t(T)^*)(x,y)\|_{\mathcal{S}\wox\K(\H)}\to 0$$
uniformly for all $x,y\in X$ as $t\to\infty$. By an analogue of lemma \ref{maxnorm}, we have that
$$\|\alpha_t(ST)-\alpha_t(S)\alpha_t(T)\|_{max}\to0\quad\mbox{and}\quad\|(\alpha_t(T^*)-\alpha_t(T)^*)\|_{max}\to0$$
as $t\to\infty$. Then, by the universal property of the maximal norm, we have a well-defined asymptotic morphism
$$\alpha_t:C_{u,max}^*(X, \mathcal{A}) \rightarrow \mathcal{S}\wox UC_{max}^*(X).$$
By the composition with the canonical quotient map, we have that
$$\alpha_t:C_{u,max}^*(X, \mathcal{A})\to \mathcal{S}\wox UC_{max}^*(X)\to\mathcal{S}\wox UC^*(X).$$
Combining with Theorem \ref{max-red}, we have a well-defined asymptotic morphism
$$\alpha_t:C_u^*(X, \mathcal{A}) \rightarrow \mathcal{S}\wox UC^*(X).$$
This gives the desired result.
\end{proof}

As for the Roe algebras, we likewise have the following lemma.

\begin{Lem}
{\it
	The maps $(\alpha_t)_{t\in [1,\infty)}$ extend to asymptotic morphisms
	$$(\alpha_t)_{t\in [1,\infty)}: C^*(X, \mathcal{A}\hat{\otimes}\mathcal{K}) \rightarrow \mathcal{S}\wox C^*(X)$$
	and its maximal counterpart
	$$(\alpha_t)_{t\in [1,\infty)}: C_{max}^*(X, \mathcal{A}\hat{\otimes}\mathcal{K}) \rightarrow \mathcal{S}\wox C_{max}^*(X).$$
}
\end{Lem}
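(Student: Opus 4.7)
The plan is to mirror the proof of Lemma \ref{Dirac} almost verbatim, with $\mathcal{K}$-coefficients carried along in the second tensor factor of each matrix entry. The key structural observation is that the asymptotic morphism $\theta^N_t(p(x)):\mathcal{A}(W_N(p(x)))\to\mathcal{S}\wox\K(\H)$ extends tensorially to a map $\theta^N_t(p(x))\wox\mathrm{id}_{\K}$ on $\mathcal{A}(W_N(p(x)))\wox\K$; after fixing an isomorphism $\K(\H)\wox\K\cong\K$, the target becomes $\mathcal{S}\wox\K$, which is precisely the entry type for $\mathcal{S}\wox\mathbb{C}[X,\K]$. In particular, the definition of $\alpha_t$ given earlier on $\mathbb{C}[X,\mathcal{A}\hat{\otimes}\mathcal{K}]$ genuinely lands in $\mathcal{S}\wod\mathbb{C}[X,\K]$.

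First I would treat the maximal version. Given $T,S\in\mathbb{C}[X,\mathcal{A}\hat{\otimes}\mathcal{K}]$, I choose $N$ large enough that lifts $T_1,S_1,(TS)_1,T_1^*$ in $\mathcal{A}(W_N(p(x)))\wox\K$ exist simultaneously, using condition (1) of Definition \ref{algebraic twisted uniform Roe algebra} together with the stability remark that $W_n(p(x))\subset W_{n+1}(p(y))$ when $d_X(x,y)$ is bounded by the propagations of $T$ and $S$. The pointwise estimates of Lemmas 7.3, 7.4 and 7.5 of \cite{Yu2000}, combined with $(\theta^N_t(p(x))\wox\mathrm{id}_\K)$-multiplicativity in the $\K$-factor, yield
$$\sup_{x,y\in X}\|(\alpha_t(TS)-\alpha_t(T)\alpha_t(S))(x,y)\|_{\mathcal{S}\wox\K}\to 0,\qquad \sup_{x,y\in X}\|(\alpha_t(T^*)-\alpha_t(T)^*)(x,y)\|_{\mathcal{S}\wox\K}\to 0$$
as $t\to\infty$. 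Applying the Roe-algebra analog of Lemma \ref{maxnorm}, these uniform entry-wise estimates upgrade to $\|\alpha_t(TS)-\alpha_t(T)\alpha_t(S)\|_{\max}\to 0$ and $\|\alpha_t(T^*)-\alpha_t(T)^*\|_{\max}\to 0$. The universal property of the maximal norm then extends $(\alpha_t)$ to an asymptotic morphism $C^*_{max}(X,\mathcal{A}\hat{\otimes}\K)\to\mathcal{S}\wox C^*_{max}(X)$.

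For the reduced counterpart I compose with the canonical quotient $\mathcal{S}\wox C^*_{max}(X)\to\mathcal{S}\wox C^*(X)$, obtaining an asymptotic morphism from $C^*_{max}(X,\mathcal{A}\hat{\otimes}\K)$ to $\mathcal{S}\wox C^*(X)$. Theorem \ref{max-red} then identifies $C^*_{max}(X,\mathcal{A}\hat{\otimes}\K)\cong C^*(X,\mathcal{A}\hat{\otimes}\K)$, producing the desired
$$\alpha_t:C^*(X,\mathcal{A}\hat{\otimes}\K)\to\mathcal{S}\wox C^*(X).$$

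The main obstacle is not really a genuine difficulty but rather bookkeeping: one must check that the pointwise asymptotic estimates of \cite{HKT1998} and \cite{Yu2000}, originally stated for $\mathcal{A}(W_N(p(x)))$, carry over when entries take values in $\mathcal{A}(W_N(p(x)))\wox\K$. Because $\mathrm{id}_\K$ is a $\ast$-homomorphism and the relevant norms are $C^*$-norms, tensoring with $\mathrm{id}_\K$ preserves both multiplicativity and adjoint-compatibility on the nose, so these estimates reduce entry-by-entry to the previously established scalar-entry versions. In this sense the proof is a direct adaptation of Lemma \ref{Dirac} with no substantive new input.
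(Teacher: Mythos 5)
Your proof is correct and follows exactly the route the paper takes for the uniform case in Lemma \ref{Dirac}, which the paper leaves to the reader to adapt to the Roe-algebra setting with the phrase ``likewise.'' The entry-wise estimates from Lemmas 7.3--7.5 of \cite{Yu2000}, the analogue of Lemma \ref{maxnorm}, the passage to the maximal norm, the composition with the canonical quotient, and the final appeal to Theorem \ref{max-red} are precisely the paper's implied argument, and your observation that tensoring $\theta^N_t(p(x))$ with $\mathrm{id}_\K$ preserves the estimates is the only bookkeeping needed.
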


Next we shall construct an asymptotic morphism $(\beta_t)_{t\in [1,\infty)}$ from $\mathcal{S}\hat{\otimes}C_u^*(X)$ to $C_u^*(X,\mathcal{A})$.

For $x\in X$, let $E$ be a finite-dimensional Euclidean subspace of $V$ containing $f(p(x))$, where $f:X\rightarrow V$ is a coarse embedding. Define a $*$-homomorphism
$$\beta_{E}(x):\mathcal{S} \rightarrow \mathcal{A}(E)$$
as follows:
$$\beta_{E}(x)(g)=g(X\wox 1+1\wox C_{E,p(x)})$$
for all $g\in \mathcal{S}$, where $X$ is the degree one and unbounded multiplier of $\mathcal{S}$, $C_{E,p(x)}$ is the Clifford algebra-valued function on $E$ denoted by  
$$C_{E,p(x)}(v)=v-f(p(x))\in E \subset \text{Cliff}(E)$$ 
for all $v\in E$.

\begin{Def}
For each $t\in [1,\infty)$, we define a map
$$\beta_t:\mathcal{S}\hat{\otimes}\mathbb{C}_u[X] \rightarrow C_u[X,\mathcal{A}]$$
by
$$(\beta_t(g\hat{\otimes}T))(x,y)=\beta(x)(g_t)\hat{\otimes}T(x,y)$$
for all $g\in \mathcal{S}, T\in \mathbb{C}_u[X]$, where $g_t(s)=g(t^{-1}s)$, $s\in \mathbb{R}$, and $\beta(x): \mathcal{S} \rightarrow \mathcal{A}(V)$, is the $*$-homomorphism
associated to the inclusion of the zero-dimensional affine space $0$ into $V$ by
mapping $0$ to $f(p(x))$, where $\mathcal{A}(V)$ is defined in Section \ref{Section 3}.
\end{Def}
\par
As for the Roe algebras, analogous to the above definition, for each $t\in [1,\infty)$, we can define a map 
$$\beta_t:\mathcal{S}\hat{\otimes}\mathbb{C}[X, \mathcal{K}] \rightarrow \mathbb{C}[X,\mathcal{A}\hat{\otimes}\mathcal{K}].$$

Then we have the following lemma as in \cite{Yu2000}.
\begin{Lem}\label{Bott}
{\it
The maps $(\beta_t)_{t\in [1,\infty)}$ extend to asymptotic morphisms $$(\beta_t)_{t\in [1,\infty)}:\mathcal{S}\hat{\otimes}C_u^*(X)\rightarrow C_u^*(X,\mathcal{A}),$$
and its maximal counterpart
$$(\beta_t)_{t\in [1,\infty)}:\mathcal{S}\hat{\otimes}C_{u,max}^*(X)\rightarrow C_{u,max}^*(X,\mathcal{A}).$$
}
\end{Lem}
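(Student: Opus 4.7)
The plan is to adapt Yu's original construction of the Bott asymptotic morphism \cite{Yu2000} to the present A-by-CE setting, so that the main technical content is a uniform functional-calculus estimate for the unbounded multipliers $X\wox 1+1\wox C_{V,\{f(p(x))\}}$ of $\mathcal{A}(V)$. First I would check that $\beta_t$ sends $\mathcal{S}\wod\mathbb{C}_u[X]$ into $\mathbb{C}_u[X,\mathcal{A}]$ for every fixed $t\in[1,\infty)$: since $f(p(x))\in W_1(p(x))$ by construction of $W_n(p(x))$, conditions (1), (5), (6) of Definition \ref{algebraic twisted uniform Roe algebra} hold with $N=1$; conditions (2), (3) are inherited from $T\in\mathbb{C}_u[X]$ together with $\|\beta(x)(g_t)\|\le\|g\|_\infty$; and the support condition (4) follows by working on the dense subalgebra of compactly supported $g\in\mathcal{S}=C_0(\mathbb{R})$, which forces $\operatorname{supp}(\beta(x)(g_t))\subseteq B(f(p(x)),r_2)$ for a suitable $r_2=r_2(g,t)$.

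The heart of the argument is the estimate
\[
\sup_{d_X(x,z)\le R}\bigl\|\beta(x)(g_t)-\beta(z)(g_t)\bigr\|_{\mathcal{A}(V)}\xrightarrow{t\to\infty}0,\qquad g\in\mathcal{S},\ R>0,
\]
which is exactly where the A-by-CE hypotheses intervene: if $d_X(x,z)\le R$, uniform expansivity of $p$ (condition (1) of Definition \ref{A-by-CE coarse fibration}) forces $d_Y(p(x),p(z))\le s$ for some $s=s(R)$, whence $\|f(p(x))-f(p(z))\|\le\rho_2(s)$ by the coarse embedding of $Y$ into $H$. The two unbounded multipliers $X\wox 1+1\wox C_{V,\{f(p(x))\}}$ and $X\wox 1+1\wox C_{V,\{f(p(z))\}}$ of $\mathcal{A}(V)$ then differ by the bounded Clifford-valued constant $f(p(z))-f(p(x))\in V\subset\Cl(V)$, and a standard functional-calculus estimate (via resolvent expansion or Fourier representation, as in Section 7 of \cite{Yu2000}) yields a bound of order $t^{-1}$, uniformly over $\{d_X(x,z)\le R\}$.

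Granted this estimate, asymptotic multiplicativity is routine: expanding
\[
\bigl(\beta_t(g\wod T)\beta_t(h\wod S)-\beta_t(gh\wod TS)\bigr)(x,y)=\sum_{z\in X}T(x,z)S(z,y)\,\beta(x)(g_t)\bigl(\beta(z)(h_t)-\beta(x)(h_t)\bigr),
\]
restricting $z$ to a uniformly bounded set by bounded geometry and finite propagation of $T$, and invoking Lemma \ref{maxnorm} promotes the resulting entrywise vanishing (on the fixed finite-propagation $*$-subalgebra generated by $T$ and $S$) to operator-norm vanishing in both $C_u^*(X,\mathcal{A})$ and $C^*_{u,max}(X,\mathcal{A})$ simultaneously. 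The asymptotic $*$-property $\|\beta_t(a^*)-\beta_t(a)^*\|_{max}\to 0$ is handled by exactly the same reasoning, and the maximal version of the extension then follows from the universal property of the maximal norm, parallel to the proof of Lemma \ref{Dirac}. The main obstacle I foresee is the rigorous verification of the uniform $O(t^{-1})$ bound for unbounded multipliers of $\mathcal{A}(V)$; this is the only place where care is genuinely required with the graded tensor product and the essential self-adjointness machinery from \cite{HKT1998}, but once it is in place every subsequent step reduces to bookkeeping parallel to the classical Bott asymptotic morphism proof.
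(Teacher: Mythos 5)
Your proposal is correct and follows essentially the same route as the paper: the heart of both arguments is the uniform estimate $\sup_{d_X(x,z)\le R}\|\beta(x)(g_t)-\beta(z)(g_t)\|\to 0$, deduced from uniform expansivity of $p$ and the coarse embedding $f$ by a resolvent computation on the generators $g(s)=\frac{1}{s\pm i}$ of $\mathcal{S}$, and both promote this entrywise vanishing to operator-norm vanishing (the paper via the Hilbert-module norm bound $\|\beta_t(g\hat\otimes T)\|\le\|g\|\|T\|$ together with nuclearity of $\mathcal{S}$; you via Lemma~\ref{maxnorm}, which amounts to the same control). Your explicit $O(t^{-1})$ bound from the resolvent identity $(A+it)^{-1}-(A+c+it)^{-1}=(A+it)^{-1}c(A+c+it)^{-1}$ is precisely what the paper leaves implicit when it says to "check $g$ with the generators $g(s)=\frac{1}{s\pm i}$."
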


\begin{proof}
Let $E$ be the Hilbert module as defined in Section \ref{Section 3}. For given $g\in \mathcal{S}, T\in \mathbb{C}_u[X]$, note that $\beta_t(g\hat{\otimes} T)=N_{g_t}\cdotp (1\hat{\otimes}T)$ where $N_{g_t}:E\rightarrow E $ is denoted by
$$N_{g_t}\left(\sum_{x\in X_d}a_x[x]\right)=\sum_{x\in X_d}(\beta(x)(g_t)\hat{\otimes}1)a_x[x]$$  and $1\hat{\otimes} T: E\rightarrow E$ is defined by
$$(1\hat{\otimes} T)\left(\sum_{x\in X_d}a_x[x]\right)=\sum_{y\in X_d}\left(\sum_{x\in X_d}(1\hat{\otimes}T(y,x))a_x[x]\right)[y]$$
for all  $\sum_{x\in X_d}a_x[x]\in E$.
Since $\|N_{g_t}\|\leq \mathop{sup}\limits_{x\in X_d}\|\beta(x)(g_t)\|\leq \|g\|$, we have that $$\|\beta_t(g\hat{\otimes}T)\|\leq \|g\|\|T\|.$$
Thus, a family of maps $(\beta_t)_{t\in [1,\infty)}$ extends to a linear map from algebraic tensor product $\mathcal{S}\wod\mathbb{C}_u[X]$ to both $\mathcal{S}\wox_{max} C_u^*(X)$ and $\mathcal{S}\wox_{max} C^*_{u,max}(X)$.

To prove that $(\beta_t)_{t\in [1,\infty)}$ defines an asymptotic morphism, we need to prove
$$\|\beta(x)(g_t)-\beta(y)(g_t)\|$$
uniformly tends to $0$  on $\{(x,y)\in X\times X : d_X(x,y)\leq r\}$ for fixed $g\in C_0(\mathbb{R})$ and $r\geq 0$. Since $f$ is a coarse embedding, by the condition (1) of $p$, one can check $g$ with the generators $g(s)=\frac{1}{s\pm i}$.

Consequently, the maps $(\beta_t)_{t\in [1,\infty)}$ extend to an asymptotic morphism from the maximal tensor product $\mathcal{S}\hat{\otimes}_{max}C_u^*(X)$ to $C_u^*(X,\mathcal{A})$ . Since $\mathcal{S}$ is nuclear, we conclude that $(\beta_t)_{t\in [1,\infty)}$ extend to an asymptotic morphism from  $\mathcal{S}\hat{\otimes}C_u^*(X)$ to $C_u^*(X,\mathcal{A})$.

For the maximal case, $(\beta_t)_{t\in [1,\infty)}$ extends to an asymptotic morphism from $\mathcal{S}\hat{\otimes} C^*_{u,max}(X)$ to $C_{u,max}^*(X,\mathcal{A})$ for the same reason, as desired.
\end{proof}

As for the Roe algebras, like the argument above, we also have the following result.
\begin{Lem}
{\it 
	The maps $(\beta_t)_{t\in [1,\infty)}$ extend to asymptotic morphisms 
	$$(\beta_t)_{t\in [1,\infty)}:\mathcal{S}\hat{\otimes}C^*(X)\rightarrow C^*(X,\mathcal{A}\hat{\otimes}\mathcal{K}),$$
	and its maximal counterpart
	$$(\beta_t)_{t\in [1,\infty)}:\mathcal{S}\hat{\otimes}C_{max}^*(X)\rightarrow C_{max}^*(X,\mathcal{A}\hat{\otimes}\mathcal{K}).$$
}
\end{Lem}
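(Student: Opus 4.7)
The plan is to follow the strategy of Lemma \ref{Bott} almost verbatim, inserting compact-operator coefficients into the argument. First I would check that the formula $(\beta_t(g\wox T))(x,y)=\beta(x)(g_t)\wox T(x,y)$ genuinely lands in the algebraic twisted Roe algebra $\IC[X,\A\wox\K]$: boundedness of the entries and finite propagation are inherited from $T\in\IC[X,\K]$, while conditions on support and derivative bounds involving the coarse embedding $f\circ p$ follow from the fact that $\beta(x)(g_t)$ is supported near $f(p(x))$ with derivatives controlled by $\|g\|$ and $t^{-1}$. This shows $\beta_t$ is a well-defined linear map from $\mathcal{S}\wod \IC[X,\K]$ into $\IC[X,\A\wox\K]$.

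Next I would set up the Hilbert module analogue of $E$ from Section \ref{Section 3} but now with values in $\A(V)\wox\K$ (acting on $\ell^2(X,H)$ rather than $\ell^2(X)$). Write $\beta_t(g\wox T)=N_{g_t}\cdot(1\wox T)$, where $N_{g_t}$ is the diagonal multiplication operator given by the family $\{\beta(x)(g_t)\}_{x\in X}$ and $1\wox T$ is the amplification of $T\in\IC[X,\K]$. Exactly as in Lemma \ref{Bott}, one has $\|N_{g_t}\|\le\sup_{x}\|\beta(x)(g_t)\|\le\|g\|$, hence $\|\beta_t(g\wox T)\|\le\|g\|\,\|T\|$ on both the reduced and the maximal completion of $\IC[X,\K]$. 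Thus $\beta_t$ extends to a bounded linear map on the spatial/maximal tensor products.

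For the asymptotic morphism properties, the only nontrivial point is asymptotic multiplicativity, which reduces to the assertion that
\[
\|\beta(x)(g_t)-\beta(y)(g_t)\|\longrightarrow 0
\]
uniformly over $\{(x,y):d_X(x,y)\le r\}$ for each fixed $g\in\mathcal{S}$ and $r>0$. Using condition (1) of Definition \ref{A-by-CE coarse fibration} to pass from $r$ in $X$ to some $s$ in $Y$, the coarse embedding $f:Y\to H$ controls $\|f(p(x))-f(p(y))\|$, and one verifies the estimate on the generators $g(s)=(s\pm i)^{-1}$ of $C_0(\mathbb{R})$ by a resolvent computation, then extends by density. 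Asymptotic $\ast$-multiplicativity and $\ast$-preservation in the reduced case then follow exactly as in \cite{Yu2000}.

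Finally, to pass to maximal completions, I would exploit the universality of the maximal norm together with the nuclearity of $\mathcal{S}$. Since $(\beta_t)$ is bounded into $C^*(X,\A\wox\K)$ (resp. $C^*_{\max}(X,\A\wox\K)$) on the maximal tensor product $\mathcal{S}\wox_{\max} C^*(X)$ (resp. $\mathcal{S}\wox_{\max} C^*_{\max}(X)$), nuclearity identifies this with $\mathcal{S}\wox C^*(X)$ (resp. $\mathcal{S}\wox C^*_{\max}(X)$). The main (and essentially only) obstacle is the uniform resolvent estimate for $\|\beta(x)(g_t)-\beta(y)(g_t)\|$; everything else is a direct transcription of the proof of Lemma \ref{Bott} with an extra tensor factor of $\K$.
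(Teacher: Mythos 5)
Your proposal is correct and follows essentially the same route as the paper: the paper does not give a separate proof for this lemma but simply says "like the argument above," referring to the proof of the uniform version (Lemma \ref{Bott}), which is exactly what you transcribe — the factorization $\beta_t(g\wox T)=N_{g_t}\cdot(1\wox T)$ with the bound $\|N_{g_t}\|\le\|g\|$, the reduction of asymptotic multiplicativity to uniform convergence of $\|\beta(x)(g_t)-\beta(y)(g_t)\|$ checked on the resolvent generators $g(s)=(s\pm i)^{-1}$, and the passage from maximal to spatial tensor products via nuclearity of $\mathcal S$. The only necessary modification, tensoring throughout with $\K$ (equivalently, working over $\ell^2(X,H)$ instead of $\ell^2(X)$), is correctly identified and handled.
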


Recall that in Remark \ref{Morita equivalence}, we state that 
J. \v{S}pakula and R. Willett \textup{\cite[Proposition 4.7]{JR2013}} constructed a pre-Hilbert $U\mathbb{C}[X]$-$\mathbb{C}_u[X]$-bimodule $E_{alg}$,  and then established the Morita equivalence between the (maximal) uniform algebra and the (maximal) uniform Roe algebra via different completion of pre-Hilbert module $E_{alg}$. By the result of R.Exel \cite{Exel1993}, a Morita equivalence bimodule $E$ induces an isomorphism $E_*$ on $K$-theory. The following result is a geometric analogue of the infinite-dimensional Bott periodicity introduced by N. Higson, G. Kaspatov and J. Trout in \cite{HKT1998}.  It is essentially proved in \cite[Proposition 7.7]{Yu2000}.

\begin{Pro}\label{composition iso}
{\it 
The compositions
$$E_*\circ\alpha_* \circ \beta_*: K_*(\mathcal{S}\hat{\otimes}C_{u}^*(X)) \rightarrow K_*(\mathcal{S}\hat{\otimes}C_{u}^*(X))$$
and its maximal counterpart
$$E_*\circ\alpha_* \circ \beta_*: K_*(\mathcal{S}\hat{\otimes}C_{u,\max}^*(X)) \rightarrow K_*(\mathcal{S}\hat{\otimes}C_{u,\max}^*(X))$$
are identity maps.
}
\end{Pro}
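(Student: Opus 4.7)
The plan is to follow the strategy of the infinite-dimensional Bott periodicity of Higson--Kasparov--Trout \cite{HKT1998} as adapted by Yu \cite{Yu2000} in his Proposition 7.7, extending everything to the uniform setting and to the maximal completion. The goal is to show that at the level of elementary tensors $g\wox T\in\mathcal{S}\wox\mathbb{C}_u[X]$, the composition $\alpha_t\circ\beta_t$ is asymptotically equivalent, up to a controlled homotopy of asymptotic morphisms, to the map $g\wox T\mapsto g\wox i_P(T)$, where $i_P:\mathbb{C}_u[X]\to U\mathbb{C}[X]$ is the Morita inclusion associated to a rank-one projection as recalled in Remark \ref{Morita equivalence}.

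First I would unravel the composition. For $g\in\mathcal{S}$ and $T\in\mathbb{C}_u[X]$, the element $\beta_t(g\wox T)(x,y)=\beta(x)(g_t)\wox T(x,y)$ lies, via $\beta_{N,0}(p(x))$, in the image of $\mathcal{A}(W_N(p(x)))$ inside $\mathcal{A}(V)$, where the zero-dimensional affine subspace is $\{f(p(x))\}$. Applying $\theta_t^N(p(x))$ and using the defining formula $\theta_t^N(p(x))(g_0\wox h_0)=(g_0)_t(X\wox 1+1\wox B_{N,t}(x))\,(1\wox M_{(h_0)_t})$, one obtains
\[
(\alpha_t\circ\beta_t)(g\wox T)(x,y)\;=\;g_t\bigl(X\wox 1+1\wox B_{N,t}(x)\bigr)\bigl(1\wox M_{C_{W_N,f(p(x))}}\bigr)\,T(x,y).
\]
The key computation is that, exactly as in the Higson--Kasparov--Trout Bott--Dirac argument, as $t\to\infty$ the operator $g_t(X\wox 1+1\wox B_{N,t}(x))\,M_{C_{W_N,f(p(x))}}$ becomes asymptotically equivalent to $g\wox P_x$, where $P_x\in\mathcal{K}(\mathcal{H})$ is the rank-one projection onto the ground state of the harmonic oscillator centred at $f(p(x))$. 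This is the standard content of the Bott--Dirac periodicity calculation; the new input is only that everything is done uniformly in $x\in X$, which is permitted because $f$ is a coarse embedding and the dimensions $\dim W_N(p(x))$ are uniformly bounded by the bounded-geometry of $Y$.

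Second, I would compare this limit with $i_P$. Since all $P_x$ are rank-one projections obtained in a uniform way from the Gaussian ground state, one can build an asymptotic homotopy (using e.g.\ rotation of one rank-one projection into another, via a path of unitaries in $\mathcal{K}(\mathcal{H})^+$) between the diagonal family $(P_x)_{x\in X}$ and the constant rank-one projection $P$ used in Remark \ref{Morita equivalence}. Propagating this through $T\wox\mathrm{id}_H$ shows that $\alpha_t\circ\beta_t$ is homotopic, as an asymptotic morphism from $\mathcal{S}\wox C_u^*(X)$ to $\mathcal{S}\wox UC^*(X)$, to $\mathrm{id}_{\mathcal{S}}\wox i_P$. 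At the level of $K$-theory this gives
\[
\alpha_*\circ\beta_* \;=\; \mathrm{id}_{K_*(\mathcal{S})}\otimes (i_P)_*,
\]
and composing with $E_*$ from Remark \ref{Morita equivalence}, which is inverse to $(i_P)_*$, yields the identity on $K_*(\mathcal{S}\wox C_u^*(X))$.

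For the maximal case the same construction works verbatim, provided the intermediate homotopies and the asymptotic estimates are carried out in the maximal norm; this is legitimate because (i) Lemma \ref{Dirac} and Lemma \ref{Bott} already produce $(\alpha_t)$ and $(\beta_t)$ at the maximal level, (ii) Theorem \ref{max-red} identifies the maximal and reduced twisted uniform Roe algebras so all asymptotic estimates of $\alpha_t\circ\beta_t$ acquired in the reduced setting lift, and (iii) the Morita equivalence of Remark \ref{Morita equivalence} is also available in the maximal setting. The main obstacle is the uniform-in-$x$ Bott--Dirac calculation: one must show that the convergence $g_t(X\wox 1+1\wox B_{N,t}(x))\,M_{C_{W_N,f(p(x))}}\to g\wox P_x$ holds uniformly in $x\in X$ and is compatible with the maximal $C^*$-norm. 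This uniformity is the geometric content of Yu's Proposition~7.7 \cite{Yu2000}, and in our setting is guaranteed by the equi-property A along the fibres together with the bounded geometry of $Y$; once this uniform estimate is in place, the rest is formal.
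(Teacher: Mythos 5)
Your overall strategy matches the paper's: express $\alpha_t\circ\beta_t$ explicitly on elementary tensors, identify the asymptotic limit with the Morita inclusion $i_P$ from Remark~\ref{Morita equivalence}, and then invoke Exel's Morita equivalence and Theorem~\ref{max-red} to handle both the reduced and maximal cases. The maximal bookkeeping in your third paragraph is also in line with what the paper does. However, there is a genuine gap in the middle of the argument, precisely where the real content of this proposition lies.

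You claim that the asymptotic limit of $(\alpha_t\circ\beta_t)(g\wox T)(x,y)$ is $g\wox P_x T(x,y)$ with $P_x$ the Gaussian ground-state projection ``centred at $f(p(x))$,'' and then propose to homotope the family $(P_x)_{x\in X}$ to a constant projection $P$ by ``rotation of one rank-one projection into another, via a path of unitaries in $\mathcal{K}(\mathcal{H})^+$,'' and to ``propagate this through $T\wox\mathrm{id}_H$.'' This last step does not work as stated. If $u_x$ is a unitary with $u_xP_xu_x^*=P$, conjugating the matrix $\big(P_x\,T(x,y)\big)_{x,y}$ by $\mathrm{diag}(u_x)$ produces entries $T(x,y)\,u_xP_xu_y^*$, which is \emph{not} $T(x,y)\,P$ because the conjugating unitary depends on the row index but the conjugation involves a different column index. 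A naive path of unitaries therefore does not give a homotopy of $*$-homomorphisms (or even of asymptotic morphisms) from $\mathcal{S}\wox C_u^*(X)$ to $\mathcal{S}\wox UC^*(X)$; the compatibility with the matrix/propagation structure is exactly the obstacle that the Higson--Kasparov--Trout and Yu arguments are designed to overcome.

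What the paper actually does (following Yu's Proposition~7.7 and \cite[Prop.~4.2, Lemma~5.8]{HKT1998}) is: (i) conjugate by the specific translation unitary $U_x(t)$ implementing $v\mapsto v+tf(p(x))$, which, by the structure of $\theta_t^1(x)$, converts the Clifford operator $C_{W_1(p(x)),p(x)}$ centred at $f(p(x))$ into the origin-centred $C_{W_1(p(x))}$; (ii) use the $2\times 2$ rotation trick $U_{x,s}(t)=R(s)\,\mathrm{diag}(\tilde U_x(t),1)\,R(s)^{-1}$ to turn this conjugation into an honest homotopy of asymptotic morphisms with values in $\mathcal{S}\wox UC^*(X)\wox M_2(\mathbb{C})$; and (iii) finish with the scaling homotopy $s\mapsto s^{-1}B_{0,t}$, which collapses the (now $x$-independent) Bott--Dirac operator onto the rank-one projection $P$ onto its kernel. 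It is only after this chain of explicit homotopies that one identifies $\alpha_t\circ\beta_t$ with $i_P$ up to homotopy. So your proposal should be revised to invoke the translation unitary and the rotation trick explicitly, rather than an unspecified ``rotation of rank-one projections''; as written, that step would fail.
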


\begin{proof}
We will only prove it for the reduced case. The argument is the same for the maximal case. Let $g\in \mathcal{S}=C_0(\mathbb{R})$ be a continuously differentiable function with compact support, and $T\in \mathbb{C}_u[X]$.  For each $n \geq 1$, we know that $f(p(x))\in W_1(p(x))$. By the definition of $\alpha_t$ and $\beta_t$, and Remark \ref{composition}, we have
\begin{equation*}\begin{aligned}
(\alpha_t\circ\beta_t(g\wox T))(x,y)
&=(\alpha_t(\beta(x)(g_t)\wox T))(x,y) \\
&=\theta_t^1(x)(\beta_{W_1(p(x))}(x)(g_t)\wox T(x,y))  \\
&= \theta_t^1(x)(g_t(X\wox 1+1\wox C_{W_1(p(x)),x})\wox T(x,y))
\end{aligned}\end{equation*}
where  $C_{W_1(p(x)),p(x)}$ is the Clifford algebra-valued function on $W_1(p(x))$ is defined to be  
$$C_{W_1(p(x)),p(x)}(v)=v-f(p(x))\in W_1(p(x)) \subset \text{Cliff}(E)$$ 
for all $v\in W_1(p(x))$.

Let $\gamma_t$ be the asymptotic morphism
$$\gamma_t:\mathcal{S}\wox C^*_u(X)\to\mathcal{S}\wox UC^*(X)$$
defined by
$$(\gamma_t(g\wox T))(x,y)=\theta_t^1(x)(\beta_{W_1(p(x))}(x)(g_t)\wox T(x,y)).$$
	
For every $t\geq 1, s\in[0,1]$ and $x\in X$, let $U_{x}(t)$ be the unitary operator acting on $\mathcal{H}$ induced by the translation $$v\mapsto v+t f(p(x))$$ on $V$. That is, $$(U_{x}(t)\xi)(v)=\xi(v+tf(p(x)))$$ for $\xi\in \mathcal{H}$. Write $\tilde{U}_{x}(t)$ as $1\hat{\otimes}1\hat{\otimes}U_{x}(t)\hat{\otimes}1$ acting on $L^2(\mathbb{R})\hat{\otimes}\ell^2(X)\hat{\otimes}\mathcal{H}\hat{\otimes}H_0$. We define a unitary operator  $U_{x,s}(t)$	acting on $L^2(\mathbb{R})\hat{\otimes}\ell^2(X)\hat{\otimes}\mathcal{H}\hat{\otimes}H_0\oplus L^2(\mathbb{R})\hat{\otimes}\ell^2(X)\hat{\otimes}\mathcal{H}\hat{\otimes}H_0$ by:
\begin{equation*}
U_{x,s}(t)=R(s)\left(\begin{matrix}
\tilde{U}_{x}(t) & 0 \\
0      & 1
\end{matrix}\right)R(s)^{-1},
\end{equation*}
where
\begin{equation*}R(s)=\left(\begin{matrix}
cos(\frac{\pi}{2}s)  & sin(\frac{\pi}{2}s)\\
-sin(\frac{\pi}{2}s) & cos(\frac{\pi}{2}s)
\end{matrix}\right).\end{equation*}
	
For each $s\in[0,1]$, we define an asymptotic morphism
$$\gamma_t(s):\mathcal{S}\wox C^*_u(X)\to\mathcal{S}\wox UC^*(X)\wox M_2(\IC)$$
by
\begin{equation*}
\gamma_t(s)(g\wox T)(x,y)=U_{x,s}(t)
\left(\begin{matrix}
\gamma_t(g\wox T)(x,y)     & 0 \\
0      & 0
\end{matrix}\right)U_{x,s}(t)^{-1}.
\end{equation*}
for each $g\in\mathcal{S}$, $T\in\IC_u[X]$, $t\geq 1$, $s\in[0,1]$, where the algebra of all complex 2-by-2 matrices $M_2(\IC)$ is endowed with the trivial grading.

Since $f(p(x))\subset W_1(p(x))$, we have that $$U_{x}(t)\theta_t^1(x)(\beta_{W_1(p(x))}(g_t)\hat{\otimes}T(x,y)) U_{x}(t)^{-1}=\theta_t^1(x)(\beta_{W_1(p(x))}(0)(g_t)\hat{\otimes}T(x,y)),$$
where $\beta_{W_1(p(x))}(0):\mathcal{S} \rightarrow \mathcal{A}(W_1(p(x)))$ is defined by:
$$(\beta_{W_1(p(x))}(0))(g)=g(X\hat{\otimes}1+1\hat{\otimes}C_{W_1(p(x))}).$$

Thus, by the proof of \cite[Proposition 4.2]{HKT1998}, we have that $\gamma_t(0)$ is asymtotic to an asymptotic morphism $\left(\begin{matrix}
\gamma_t' & 0 \\
0     & 0
\end{matrix}\right)$, where $\gamma_t'$ is  defined by
$$\gamma_t'(g\hat{\otimes}T)(x,y)\mapsto g_{t^2}(B_{0,t}(x))\hat{\otimes}T(x,y)$$ 
where $$B_{0,t}=t_0(D_0+C_0)+t_1(D_1+C_1)+\cdots+t_n(D_n+C_n)+\cdots$$
where $t_j=1+t^{-1}j$, $D_n$ and $C_n$ are respectively the Dirac operator and Clifford operator associated to $W_{n+1}(p(x))\ominus W_n(p(x))$. 

By \cite[Lemma 5.8]{HKT1998}, replacing $B_{0,t}$ with $s^{-1}B_{0,t}$ in the definition of $\gamma_t'$, for $s\in [0,1]$, we obtain a homotopy between  $\gamma_t'$ and the $*$-homomorphism
$$g\hat{\otimes}T\mapsto g\hat{\otimes}g(0)P\hat{\otimes}T$$
from $\mathcal S\wox C^*_u(X)$ to $\mathcal S\wox UC^*(X)$, where $P$ is the rank one projection onto the kernel of $B_{0,t}(x)$ and $P$ does not depend on $x$. Consequently, we have that $\alpha_t\circ\beta_t(g\hat{\otimes}T)$ is homotopic to $g\hat{\otimes}g(0)P\hat{\otimes}T$ for all $g\hat{\otimes}T\in\mathcal S\wox C^*_u(X)$. Moreover, the composition $(\alpha_t\circ \beta_t)_{t\in [1,\infty)}$ is representative of the composition of the asymptotic morphisms $(\alpha_t)_{t\in [1,\infty)}$ and $(\beta_t)_{t\in [1,\infty)}$ (see the details \cite[Appendix A]{JR2013} for a remark on compositions of asymptotic morphisms). Hence, by Remark \ref{Morita equivalence}, the composition
$E_* \circ\alpha_* \circ \beta_*$ are both identity maps.
\end{proof}

As for the Roe algebras, similarly, we have the following proposition.
\begin{Pro}\label{Roeiso}
{\it 
	The compositions
	$$\alpha_* \circ \beta_*: K_*(\mathcal{S}\hat{\otimes}C^*(X)) \rightarrow K_*(\mathcal{S}\hat{\otimes}C^*(X))$$ 
and its maximal counterpart
	$$\alpha_* \circ \beta_*: K_*(\mathcal{S}\hat{\otimes}C_{max}^*(X)) \rightarrow K_*(\mathcal{S}\hat{\otimes}C_{max}^*(X))$$
are identity maps.
}
\end{Pro}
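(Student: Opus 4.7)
My plan is to mimic the proof of Proposition \ref{composition iso} line by line, noting that in the Roe algebra setting the compact operators $\mathcal{K}$ are already built into the coefficients, so no Morita equivalence step is needed at the end. First I would fix $g \in \mathcal{S}$ and $T \in \mathbb{C}[X,\mathcal{K}]$, compute $(\alpha_t \circ \beta_t)(g \wox T)$ using the definitions of $\alpha_t$ and $\beta_t$ together with Remark \ref{composition}, and obtain that
$$(\alpha_t\circ\beta_t(g\wox T))(x,y)=\theta_t^1(p(x))\bigl(g_t(X\wox 1 + 1\wox C_{W_1(p(x)),p(x)}) \wox T(x,y)\bigr),$$
which is the defining formula of an intermediate asymptotic morphism $\gamma_t:\mathcal{S}\wox C^*(X) \to \mathcal{S}\wox C^*(X)$.

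Next I would introduce the translation unitaries $U_{x}(t)$ on $\mathcal{H}$ induced by $v\mapsto v+tf(p(x))$, and the $2\times 2$ rotation $R(s)$, exactly as in the proof of Proposition \ref{composition iso}, to build a homotopy of asymptotic morphisms
$$\gamma_t(s)(g\wox T)(x,y)=U_{x,s}(t)\begin{pmatrix}\gamma_t(g\wox T)(x,y) & 0 \\ 0 & 0\end{pmatrix}U_{x,s}(t)^{-1}$$
connecting $\gamma_t=\gamma_t(1)$ (up to the irrelevant zero block) to $\gamma_t(0)$, which conjugates by the translation and hence replaces $C_{W_1(p(x)),p(x)}$ by $C_{W_1(p(x))}$ centered at $0$. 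By the proof of \cite[Proposition 4.2]{HKT1998}, $\gamma_t(0)$ is then asymptotic to the morphism $\gamma_t'(g\wox T)(x,y) = g_{t^2}(B_{0,t}(x)) \wox T(x,y)$.

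Then I would use the standard $s^{-1}B_{0,t}$ rescaling homotopy from \cite[Lemma 5.8]{HKT1998} to deform $\gamma_t'$ to the $*$-homomorphism
$$g\wox T \longmapsto g \wox (g(0)\, P \wox T),$$
where $P\in\mathcal{K}(\mathcal{H})$ is the rank-one projection onto the (one-dimensional) kernel of $B_{0,t}$, independently of $x$. This is where the Roe case becomes simpler than the uniform case: the map $T\mapsto P\wox T$ takes values in $\mathbb{C}[X,\mathcal{K}\wox\mathcal{K}]$, and after the canonical identification $\mathcal{K}\wox\mathcal{K}\cong\mathcal{K}$ (implemented by a rank-one corner embedding) this is the standard stabilization map $C^*(X)\hookrightarrow C^*(X)$ which induces the identity on $K$-theory. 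Thus no analogue of the bimodule $E_*$ is needed to land back in $\mathcal{S}\wox C^*(X)$.

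The main (and only) point requiring care will be to verify that each of these homotopies extends simultaneously to the maximal completion, so that the same argument proves the statement for $\mathcal{S}\wox C^*_{\max}(X)$. For this I would invoke Lemma \ref{maxnorm} entry-wise (controlling the maximal norm uniformly by the supremum of entry norms on operators of bounded propagation), exactly as in the proofs of Lemmas \ref{Dirac} and \ref{Bott}, to ensure that the asymptotic morphisms $\gamma_t(s)$ and their homotopies extend continuously to the maximal twisted Roe algebra and the maximal Roe algebra. Once this uniform control is established, the composition $\alpha_* \circ \beta_*$ equals the identity on $K_*(\mathcal{S}\wox C^*_{\max}(X))$ by the same homotopy argument.
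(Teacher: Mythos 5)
Your proposal is correct and follows the same route the paper intends: the paper gives no separate proof for this proposition, deferring entirely to the argument for Proposition~\ref{composition iso}, and you have accurately reconstructed that argument in the Roe setting, including the correct observation that the Morita equivalence step $E_*$ is unnecessary because $\mathcal{K}\wox\mathcal{K}\cong\mathcal{K}$ is implemented by a corner embedding inducing the identity on $K$-theory.
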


Finally, we are ready to prove the main result of this paper.

\begin{proof}[Proof of Theorem \ref{main result}:]	
For the maximal and reduced uniform Roe algebras, 
consider the following commutative diagram:
$$\xymatrix{K_*(\mathcal{S}\hat{\otimes}C_{u,max}^*(X))\ar[r]^{\lambda_*}\ar[d]_{\beta_*} & K_*(\mathcal{S}\hat{\otimes}C_u^*(X))  \ar[d]^{\beta_*} \\
K_*(C_{u,max}^*(X,\mathcal{A}))\ar[r]^{\lambda_*}_{\cong}\ar[d]_{\alpha_*}&K_*(C_u^*(X,\mathcal{A}))\ar[d]^{\alpha_*} \\
K_*(\mathcal{S}\hat{\otimes}UC_{max}^{*}(X))\ar[r]^{\lambda_*}\ar[d]_{E_*}&K_*(\mathcal{S}\hat{\otimes}UC^*(X))\ar[d]^{E_*} \\
K_*(\mathcal{S}\hat{\otimes}C_{u,max}^*(X))\ar[r]^{\lambda_*}& K_*(\mathcal{S}\hat{\otimes}C_u^*(X))}$$
By Proposition \ref{composition iso} the two vertical compositions are isomorphisms. By Theorem \ref{max-red} the second horizontal homomorphism is an isomorphism. Hence, Theorem \ref{main result} follows from a diagram chasing argument.

As for the maximal and reduced Roe algebras, we consider the following commutative diagram: 
	$$\xymatrix{K_*(\mathcal{S}\hat{\otimes}C_{max}^*(X))\ar[r]^{\lambda_*}\ar[d]_{\beta_*} & K_*(\mathcal{S}\hat{\otimes}C^*(X))  \ar[d]^{\beta_*} \\
		K_*(C_{max}^*(X,\mathcal{A}\hat{\otimes}\mathcal{K}))\ar[r]^{\lambda_*}_{\cong}\ar[d]_{\alpha_*}&K_*(C^*(X,\mathcal{A}\hat{\otimes}\mathcal{K}))\ar[d]^{\alpha_*} \\
		K_*(\mathcal{S}\hat{\otimes}C_{max}^*(X))\ar[r]^{\lambda_*}& K_*(\mathcal{S}\hat{\otimes}C^*(X))}$$
By Proposition \ref{Roeiso} the two vertical compositions are isomorphisms. By Theorem \ref{max-red} the second horizontal homomorphism is an isomorphism. Via a diagram chasing argument, $\lambda_*$ is an isomorphism as desired.
	
\end{proof}

\section*{Acknowledegments}
We would like to thank Jintao Deng and Guoliang Yu for valuable suggestions and helpful discussions.

\vskip 1cm
\begin{itemize}
\item[] Liang Guo\\
Research Center for Operator Algebras, School of Mathematical Sciences, East China Normal University, Shanghai, 200241, P. R. China.\quad
E-mail: 52205500015@stu.ecnu.edu.cn

\item[] Zheng Luo \\
Research Center for Operator Algebras, School of Mathematical Sciences, East China Normal University, Shanghai, 200241, P. R. China.\quad
E-mail: 52195500005@stu.ecnu.edu.cn

\item[] Qin Wang \\
Research Center for Operator Algebras,  and Shanghai Key Laboratory of Pure Mathematics and Mathematical Practice, School of Mathematical Sciences, East China Normal University, Shanghai, 200241, P. R. China. \quad
E-mail: qwang@math.ecnu.edu.cn

\item[] Yazhou Zhang \\
Research Center for Operator Algebras, School of Mathematical Sciences, East China Normal University, Shanghai, 200241, P. R. China.\quad
E-mail: 52185500010@stu.ecnu.edu.cn

\end{itemize}

\end{document}